\newtheorem{thm}{Theorem}[section]
\newtheorem{prop}[thm]{Proposition}
\newtheorem{lem}[thm]{Lemma}
\newtheorem{cor}[thm]{Corollary}
\theoremstyle{definition}
\newtheorem{rmk}[thm]{Remark}
\numberwithin{equation}{section}
\newcommand{\PP}{\ensuremath{\mathbb{P}}}
\newcommand{\GG}{\ensuremath{\mathbb{G}}}
\def\Sym{\operatorname{Sym}}
\begin{document}

\title[Irreducibility and components rigid in moduli of the Hilbert scheme]{Irreducibility and components rigid in moduli of \\ the Hilbert scheme of smooth curves}

\author[C. Keem, Y.H. Kim and A.F. Lopez]{Changho Keem*, Yun-Hwan Kim and Angelo Felice Lopez**}

\thanks{* Supported in part by NRF Grant \# 2011-0010298.}

\thanks{** Supported in part by the MIUR national project ``Geometria delle variet\`a algebriche" PRIN 2010-2011.}

\address{\hskip -.43cm Department of Mathematics, Seoul National University, Seoul 151-742, South Korea. \newline e-mail {\tt ckeem1@gmail.com, yunttang@snu.ac.kr}}

\address{\hskip -.43cm Dipartimento di Matematica e Fisica, Universit\`a di Roma
Tre, Largo San Leonardo Murialdo 1, 00146, Roma, Italy. \newline e-mail {\tt lopez@mat.uniroma3.it}}

\thanks{{\it 2010 Mathematics Subject Classification}: Primary 14C05, 14C20}

\keywords{Hilbert scheme, algebraic curves, linear series, gonality}

\date{}

\begin{abstract} Denote by $\mathcal{H}_{d,g,r}$ the Hilbert scheme of smooth curves, that is the union of components whose general point corresponds to a smooth irreducible and non-degenerate curve of degree $d$ and genus $g$ in $\PP^r$. A component of $\mathcal{H}_{d,g,r}$ is rigid in moduli if its image under the natural map $\pi:\mathcal{H}_{d,g,r} \dashrightarrow \mathcal{M}_{g}$ is a one point set. In this note, we provide a proof of the fact that $\mathcal{H}_{d,g,r}$ has no components rigid in moduli for $g > 0$ and $r=3$, from which it follows that the only  
smooth projective curves embedded in $\mathbb{P}^3$ whose only deformations are given by projective transformations are the twisted cubic curves. In case $r \geq 4$, we also prove the non-existence of a component of $\mathcal{H}_{d,g,r}$ rigid in moduli in a certain restricted range of $d$, $g>0$ and $r$. In the course of the proofs, we establish  the irreducibility of $\mathcal{H}_{d,g,3}$ beyond the range which has been known  before. 
\end{abstract}

\maketitle

\section{Basic set up, terminologies and preliminary results}

Given non-negative integers $d$, $g$ and $r$, let $H_{d,g,r}$ be the Hilbert scheme parametri-zing curves of degree $d$ and genus $g$ in $\PP^r$ and let $\mathcal{H}_{d,g,r}$ be the Hilbert scheme of smooth curves, that is the union of components of $H_{d,g,r}$ whose general point corresponds to a smooth irreducible and non-degenerate curve of degree $d$ and genus $g$ in $\PP^r$. Let $\mathcal{M}_g$ be the moduli space of smooth curves of genus $g$ and consider the natural rational map
\[ \pi: \mathcal{H}_{d,g,r} \dashrightarrow \mathcal{M}_g \]
which sends each point $c\in  \mathcal{H}_{d,g,r}$ representing a smooth irreducible non-degenerate curve $C$ in $\PP^r$ to the corresponding isomorphism class $[C]\in\mathcal{M}_g$. 

In this article,  we concern ourselves with the question regarding the existence of an irreducible component  $\mathcal{Z}$ of $\mathcal{H}_{d,g,r}$ whose image under the map $\pi$ is just a one point set in $\mathcal{M}_g$, which we call a {\bf component rigid in moduli}. 

It is a folklore conjecture that such components should not exist, except when $g=0$. It is also expected \cite[1.47]{HM} that there are no rigid curves in $\PP^r$, that is curves that admit no deformations other than those given by projectivities of $\PP^r$, except for rational normal curves. 

\medskip
In the next two sections, we provide a proof of the fact that $\mathcal{H}_{g+1,g,3}$ is irreducible and $\mathcal{H}_{d,g,3}$ does not have a component rigid in moduli if $g>0$. This in turn implies that  there are no rigid curves in $\mathbb{P}^3$ except for twisted cubic curves. In the subsequent section we also prove that, for $r \ge 4$, $\mathcal{H}_{d,g,r}$ does not carry any component rigid in moduli in a certain restricted range with respect to $d,g>0$ and $r$.  In proving the results,  we utilize several classical theorems including the so-called Accola-Griffiths-Harris' bound on the dimension of a component consisting of birationally very ample linear series in the variety of special linear series on a smooth algebraic curve.  
We work over the field of complex numbers.

\vskip 4pt

For notation and conventions, we usually follow those in \cite{ACGH}; e.g. $\pi (d,r)$ is the maximal possible arithmetic genus of an irreducible and non-degenerate curve of degree $d$ in $\PP^r$.
Before proceeding, we recall several related results that are rather well known;  cf. \cite{AC1}. 

\vskip 4pt
For any given isomorphism class $[C] \in \mathcal{M}_g$ corresponding to a smooth irreducible curve $C$, there exist a neighborhood $U\subset \mathcal{M}_g$ of $[C]$ and a smooth connected variety $\mathcal{M}$ which is a finite ramified covering $h:\mathcal{M} \to U$, together with varieties $\mathcal{C}$, $\mathcal{W}^r_d$ and $\mathcal{G}^r_d$ which are proper over $\mathcal{M}$ with the following properties:
\begin{enumerate}
\item[(1)] $\xi:\mathcal{C}\to\mathcal{M}$ is a universal curve, that is for every $p\in \mathcal{M}$, $\xi^{-1}(p)$ is a smooth curve of genus $g$ whose isomorphism class is $h(p)$,
\item[(2)] $\mathcal{W}^r_d$ parametrizes  pairs $(p,L)$, where $L$ is a line bundle of degree $d$ with $h^0(L) \ge r+1$,
\item[(3)] $\mathcal{G}^r_d$ parametrizes couples $(p, \mathcal{D})$, where $\mathcal{D}$ is  possibly an incomplete linear series of degree $d$ and of dimension $r$, which is denoted by $g^r_d$, on $\xi^{-1}(p)$.
\end{enumerate}

Let $\mathcal{G}$ be the union of components of $\mathcal{G}^r_d$ whose general element $(p,\mathcal{D})$ corresponds to a very ample linear series $\mathcal{D}$ on the curve $C= {\xi}^{-1}(p)$. 
Note that the open subset of $\mathcal{H}_{d,g,r}$ consisting of points corresponding to smooth curves is a $\PP GL(r+1)$-bundle over an open subset of $\mathcal{G}$. 

\vskip 4pt
We also make a note of the following fact which is basic in the theory; cf. 
\cite{AC1} or \cite[Chapter 2]{Hr}.
\begin{prop}
\label{principal}
There exists a unique component $\mathcal{G}_0$ of $\mathcal{G}$ which dominates $\mathcal{M}$ (or $\mathcal{M}_g$) if the Brill-Noether number $\rho(d, g, r):=g-(r+1)(g-d+r)$ is non-negative. Furthermore in this case, for any possible component $\mathcal{G}'$ of $\mathcal{G}$ other than $\mathcal{G}_0$, a general element $(p, \mathcal{D})$ of $\mathcal{G}'$ is such that $\mathcal{D}$ is a special linear system on $C = \xi^{-1}(p)$.
\end{prop}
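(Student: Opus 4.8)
The plan is to deduce the statement from classical Brill--Noether theory. Write $q\colon\mathcal{G}^r_d\to\mathcal{M}$ for the structure morphism; it is proper. Since $\rho:=\rho(d,g,r)\ge0$, the Existence Theorem of Kempf and Kleiman--Laksov gives $G^r_d(\xi^{-1}(p))\neq\emptyset$ for every $p\in\mathcal{M}$, so $q$ is surjective; as $\mathcal{M}$ is irreducible and $\mathcal{G}^r_d$ has only finitely many irreducible components, at least one component $\widetilde{\mathcal{G}}_0$ of $\mathcal{G}^r_d$ dominates $\mathcal{M}$. For general $p$ the fibre $\widetilde{\mathcal{G}}_0\cap q^{-1}(p)$ is a subvariety of $G^r_d(\xi^{-1}(p))$, which for general $p$ is smooth of pure dimension $\rho$ (Griffiths--Harris, Gieseker); combining this with the relative version of the excess--dimension bound one gets $\dim\widetilde{\mathcal{G}}_0=\dim\mathcal{M}+\rho$, and the same reasoning shows that every component of $\mathcal{G}^r_d$ dominating $\mathcal{M}$ has this dimension.

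Next I would check that a general member of $\widetilde{\mathcal{G}}_0$ is a very ample $g^r_d$, so that $\widetilde{\mathcal{G}}_0$ is in fact a component of $\mathcal{G}$, which we then denote by $\mathcal{G}_0$. A general point $(p,\mathcal{D})$ of $\widetilde{\mathcal{G}}_0$ has $C:=\xi^{-1}(p)$ general in $\mathcal{M}$ and $\mathcal{D}$ the complete series of a general $L\in W^r_d(C)$, so $h^0(L)=r+1$; then $\mathcal{D}$ fails to be very ample precisely when $L-D\in W^{r-1}_{d-2}(C)$ for some effective divisor $D$ of degree $2$, and the locus of such $L$ is the image of $\Sym^2 C\times W^{r-1}_{d-2}(C)$ under $(D,M)\mapsto M+D$, so for general $C$ it has dimension at most $2+\rho(d-2,g,r-1)=2+\rho+(g-d)$, which is strictly less than $\rho=\dim(\widetilde{\mathcal{G}}_0\cap q^{-1}(p))$ as soon as $d\ge g+3$ --- in particular whenever $d\ge g+r$, since $r\ge3$. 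The remaining range $d\le g+2$ is covered by the classical fact (see \cite[Ch.~21]{ACGH}, \cite{AC1}) that for $r\ge3$ a general $g^r_d$ on a general curve is very ample whenever $\rho\ge0$. Thus the non--very--ample locus is a proper closed subset of $\widetilde{\mathcal{G}}_0$, and $\widetilde{\mathcal{G}}_0=\mathcal{G}_0$ is a component of $\mathcal{G}$. For uniqueness: when $\rho\ge1$, $G^r_d(C)$ is irreducible for general $C$ (Fulton--Lazarsfeld connectedness together with Gieseker's smoothness), so two distinct dominating components of $\mathcal{G}^r_d$ would each meet the general fibre in a closed subset of the full dimension $\rho$, hence in all of $G^r_d(C)$, and would coincide; when $\rho=0$, $G^r_d(C)$ is a reduced finite set for general $C$ and one invokes the transitivity of the Brill--Noether monodromy on its points. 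In either case $\mathcal{G}_0$ is the only component of $\mathcal{G}^r_d$, a fortiori of $\mathcal{G}$, that dominates $\mathcal{M}$.

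For the last assertion, let $\mathcal{G}'\neq\mathcal{G}_0$ be a component of $\mathcal{G}$; then $\mathcal{G}'$ does not dominate $\mathcal{M}$. If $d<g+r$ every $g^r_d$ on every curve of genus $g$ is special and there is nothing to prove, so assume $d\ge g+r$. Let $\mathcal{N}\subseteq\mathcal{G}^r_d$ be the locus of pairs $(p,\mathcal{D})$ with $\mathcal{D}\subseteq|L|$ for a non-special line bundle $L$ of degree $d$: this is a Grassmann bundle, with fibre the Grassmannian of $(r+1)$-dimensional subspaces of a $(d-g+1)$-dimensional vector space, over the dense open non-special locus of the universal degree-$d$ Picard variety over $\mathcal{M}$; hence $\mathcal{N}$ is irreducible, it dominates $\mathcal{M}$, and a direct count gives $\dim\mathcal{N}=\dim\mathcal{M}+g+(r+1)(d-g-r)=\dim\mathcal{M}+\rho$. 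By the dimension statement in the first paragraph, any component of $\mathcal{G}^r_d$ containing $\overline{\mathcal{N}}$ dominates $\mathcal{M}$ and hence has dimension $\dim\mathcal{M}+\rho=\dim\overline{\mathcal{N}}$; therefore $\overline{\mathcal{N}}$ is itself a component of $\mathcal{G}^r_d$, and being dominant it equals $\mathcal{G}_0$. Consequently every component of $\mathcal{G}$ whose general member is non-special is contained in $\overline{\mathcal{N}}=\mathcal{G}_0$; since $\mathcal{G}'\neq\mathcal{G}_0$, a general member of $\mathcal{G}'$ is special, as asserted.

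The only inputs that are not purely formal are the classical dimension count ensuring very ampleness of a general $g^r_d$ on a general curve for $r\ge3$ (needed here only in the small range $d\le g+2$) and, in the boundary case $\rho=0$, the transitivity of the Brill--Noether monodromy; these are the two points where I expect the real work to lie, and both belong to the standard theory recorded in \cite{ACGH}, \cite{AC1} and \cite{Hr}.
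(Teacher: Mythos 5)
The paper does not prove Proposition \ref{principal} at all: it is quoted as a basic fact with a pointer to \cite{AC1} and \cite[Chapter 2]{Hr}, so there is no in-text argument to compare yours with. What you have done is reassemble the standard Brill--Noether package (Kempf/Kleiman--Laksov existence, the Griffiths--Harris dimension theorem, Gieseker smoothness, Fulton--Lazarsfeld connectedness, the lower bound $\dim \ge \dim\mathcal{M}+\rho$ for every component of $\mathcal{G}^r_d$, and very ampleness of a general $g^r_d$ on a general curve for $r\ge 3$) into a complete proof, and the architecture --- a unique dominating component of $\mathcal{G}^r_d$, very ampleness to see that it is a component of $\mathcal{G}$, and the irreducible non-special locus $\mathcal{N}$ of dimension exactly $\dim\mathcal{M}+\rho$ to force every other component of $\mathcal{G}$ to be generically special --- is sound and is essentially how this fact is justified in the literature. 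In particular your treatment of the last assertion via $\overline{\mathcal{N}}=\mathcal{G}_0$ is clean and correct.

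Two caveats. First, your very-ampleness count for $d\ge g+3$ assumes that a general member of $\widetilde{\mathcal{G}}_0$ is a complete series with $h^0(L)=r+1$; this is true only for $d\le g+r$, while for $d>g+r$ the dominating component is $\overline{\mathcal{N}}$ and its general member is an incomplete subseries of a non-special $L$, so the criterion ``$L-D\in W^{r-1}_{d-2}(C)$'' is not the right test there. The slip is harmless --- for $d>g+r$ very ampleness follows from very ampleness of the complete non-special $|L|$ (degree $\ge g+3$) together with generic projection to $\PP^r$, $r\ge3$, or simply from the classical theorem you invoke, which is not restricted to $d\le g+2$ --- but as written your case split does not cover that range. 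Second, for $\rho=0$ uniqueness of the dominating component is precisely transitivity of the Brill--Noether monodromy; you rightly flag this as a point of real content, but it is a theorem of Eisenbud--Harris recorded in the second volume of Arbarello--Cornalba--Griffiths (your ``Ch.~21'' reference), not in the first volume \cite{ACGH} that appears in this paper's bibliography, and it is not obviously available in the 1982--83 sources \cite{AC1}, \cite{Hr} that the paper cites; the same remark applies to the very-ampleness statement for general moduli. So your proof is correct in outline, with those two inputs properly acknowledged as the nontrivial ingredients, whereas the paper simply delegates the whole statement to the references.
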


\begin{rmk}
\label{dom}
In the Brill-Noether range, that is $\rho(d,g,r)\ge0$, the unique component $\mathcal{G}_0$ of $\mathcal{G}$ (and the corresponding component $\mathcal{H}_0$ of $\mathcal{H}_{d,g,r}$ as well) which dominates $\mathcal{M}$ or $\mathcal{M}_g$ is called the  ``principal component".  We call the other possible components ``exceptional components".
\end{rmk}

We recall the following well-known fact on the dimension of a component of the Hilbert scheme $\mathcal{H}_{d,g,r}$; cf. \cite[Chapter 2.a]{Hr}  or \cite[1.E]{HM}. 

\begin{thm}
\label{expecteddim}
Let $c\in \mathcal{H}_{d,g,r}$ be a point representing a curve $C$ in $\PP^r$. The tangent space of $\mathcal{H}_{d,g,r}$ at $c$ can be identified as
\[ T_{c}\mathcal{H}_{d,g,r}=H^0\left(C,N_{C/\PP^r}\right), \]
where $N_{C/\PP^r}$ is the normal sheaf of $C$ in $\PP^r$. Moreover, if $C$ is a locally complete intersection, in particular if $C$ is smooth, then
\[ \chi(N_{C/\PP^r})\leq \dim_{c}\mathcal{H}_{d,g,r}\leq h^0\left(C,N_{C/\PP^r}\right), \]
where $\chi(N_{C/\PP^r})=h^0\left(C,N_{C/\PP^r}\right)-h^1\left(C,N_{C/\PP^r}\right)$.
\end{thm}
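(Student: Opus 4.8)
The plan is to deduce all three assertions from the infinitesimal deformation theory of the Hilbert functor, along the lines of \cite[Chapter 2.a]{Hr} or \cite[1.E]{HM}. Recall that $\mathcal{H}_{d,g,r}$ is an open subscheme of $H_{d,g,r}=\operatorname{Hilb}^{P}(\PP^r)$ with $P(m)=dm+1-g$, and that $\operatorname{Hilb}^{P}(\PP^r)$ represents the functor sending a scheme $S$ to the set of closed subschemes of $\PP^r\times S$ that are flat over $S$ and have Hilbert polynomial $P$. Evaluating this functor on the dual numbers $k[\epsilon]=k[\epsilon]/(\epsilon^{2})$ identifies $T_{c}H_{d,g,r}$ with the set of ideal sheaves $\widetilde{\mathcal{I}}\subset\mathcal{O}_{\PP^r}\otimes_{k}k[\epsilon]$ that are flat over $k[\epsilon]$ and reduce to the ideal sheaf $\mathcal{I}_{C}$ of $C$ modulo $\epsilon$; the first-order term of such a flat lift defines an $\mathcal{O}_{C}$-linear map $\mathcal{I}_{C}/\mathcal{I}_{C}^{2}\to\mathcal{O}_{C}$, and this assignment is a bijection onto $\operatorname{Hom}_{\mathcal{O}_{C}}(\mathcal{I}_{C}/\mathcal{I}_{C}^{2},\mathcal{O}_{C})=H^{0}(C,\mathcal{H}om(\mathcal{I}_{C}/\mathcal{I}_{C}^{2},\mathcal{O}_{C}))$. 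When $C$ is a local complete intersection in $\PP^r$ — in particular when $C$ is smooth — the conormal sheaf $\mathcal{I}_{C}/\mathcal{I}_{C}^{2}$ is locally free, so its dual is the normal sheaf $N_{C/\PP^r}$, and we obtain $T_{c}\mathcal{H}_{d,g,r}=H^{0}(C,N_{C/\PP^r})$. The upper bound is then immediate, since the local dimension of a scheme at a point never exceeds the dimension of its Zariski tangent space there, so $\dim_{c}\mathcal{H}_{d,g,r}\le\dim_{k}T_{c}\mathcal{H}_{d,g,r}=h^{0}(C,N_{C/\PP^r})$.

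For the lower bound I would invoke the obstruction theory attached to the same functor. Given a small extension $0\to(t)\to A'\to A\to 0$ of Artinian local $k$-algebras and a flat family over $A$ deforming $C\subset\PP^r$, the obstruction to lifting it to a flat family over $A'$ lies naturally in $H^{1}(C,N_{C/\PP^r})$: the key point is that a local complete intersection has unobstructed \emph{local} embedded deformations (perturbing a regular sequence of local equations by $t$ times arbitrary functions again yields a regular sequence), so that the local obstructions vanish and what survives is the class of a \v{C}ech $1$-cocycle with values in the sheaf $N_{C/\PP^r}$ of first-order local deformations. Hence the complete local ring $\widehat{\mathcal{O}}_{\mathcal{H}_{d,g,r},c}$ is a quotient of a formal power series ring in $N:=h^{0}(C,N_{C/\PP^r})$ variables by an ideal that can be generated by at most $h^{1}(C,N_{C/\PP^r})$ elements. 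By Krull's principal ideal theorem every irreducible component of $\mathcal{H}_{d,g,r}$ passing through $c$ then has dimension at least $N-h^{1}(C,N_{C/\PP^r})=h^{0}(C,N_{C/\PP^r})-h^{1}(C,N_{C/\PP^r})=\chi(N_{C/\PP^r})$, and in particular $\chi(N_{C/\PP^r})\le\dim_{c}\mathcal{H}_{d,g,r}$, as claimed. The equality $\chi(N_{C/\PP^r})=h^{0}(C,N_{C/\PP^r})-h^{1}(C,N_{C/\PP^r})$ in the statement is then just the definition of the Euler characteristic, together with the fact that $H^{i}(C,N_{C/\PP^r})=0$ for $i\ge 2$ because $C$ is a curve.

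The step demanding the most care is the precise shape of the obstruction theory. The tangent-space identification $T_{c}H_{d,g,r}=\operatorname{Hom}_{\mathcal{O}_{\PP^r}}(\mathcal{I}_{C},\mathcal{O}_{C})$ is valid for an arbitrary closed subscheme, but the assertion that obstructions are controlled by the \emph{global} group $H^{1}(C,N_{C/\PP^r})$, rather than by the a priori larger group $\operatorname{Ext}^{1}_{\mathcal{O}_{\PP^r}}(\mathcal{I}_{C},\mathcal{O}_{C})$ — which through the local-to-global spectral sequence also sees $H^{0}$ of the sheaf $\mathcal{E}xt^{1}_{\mathcal{O}_{\PP^r}}(\mathcal{I}_{C},\mathcal{O}_{C})$ — rests squarely on the local complete intersection hypothesis, via the vanishing of local obstructions recorded above. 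Granting this input, which is exactly where the restriction to (smooth, hence lci) curves is used, the two inequalities are formal; full details may be found in the references cited.
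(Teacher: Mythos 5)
Your proposal is correct: the paper does not prove Theorem \ref{expecteddim} at all, but merely recalls it as a classical fact with references to \cite[Chapter 2.a]{Hr} and \cite[1.E]{HM}, and your argument is precisely the standard deformation-theoretic proof found there (dual numbers giving $T_c = \operatorname{Hom}_{\mathcal{O}_C}(\mathcal{I}_C/\mathcal{I}_C^2,\mathcal{O}_C) = H^0(N_{C/\PP^r})$, obstructions in $H^1(N_{C/\PP^r})$ for an lci via vanishing of local obstructions, and the Krull-type dimension bound $\dim_c \ge h^0 - h^1 = \chi(N_{C/\PP^r})$). The only cosmetic remark is that the identification $T_c\mathcal{H}_{d,g,r}=H^0(C,N_{C/\PP^r})$ needs no lci hypothesis, since the normal sheaf is by definition $\mathcal{H}om(\mathcal{I}_C/\mathcal{I}_C^2,\mathcal{O}_C)$, the lci assumption being needed only for the obstruction-theoretic lower bound.
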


\vskip 6pt
For a locally complete intersection $c\in \mathcal{H}_{d,g,r}$, we have
\[ \chi(N_{C/\PP^r})=(r+1)d-(r-3)(g-1) \]
which is denoted by $\lambda(d,g,r)$. 

\vskip 6pt
The following bound on the dimension of the variety of special linear series on a fixed smooth algebraic curve shall become useful in subsequent sections.

\begin{thm}[Accola-Griffiths-Harris Theorem; \rm{\cite[p.73]{Hr}}
\label{agh}]
Let $C$ be a curve of genus $g$, $|D|$ a birationally very ample special $g^r_d$, that is a special linear system of dimension $r$ and degree $d$ inducing a birational morphism from $C$ onto a curve of degree $d$ in $\PP^r$.
Then in a neighborhood of $|D|$ on ${J}(C)$, either $\dim W^r_d(C)=0$ or
\[ \dim W^r_d(C) \le h^0(\mathcal{O}_C(2D)) - 3r \le \begin{cases} d-3r+1 & \mbox{if} \ d \le g \\ 2d - 3r - g + 1 & \mbox{if} \ d \ge g \end{cases}. \]
where $J(C)$ denotes the Jacobian variety of $C$.
\end{thm}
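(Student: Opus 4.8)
The plan is to bound the local dimension of $W^r_d(C)$ at $[D]$ by the dimension of its Zariski tangent space, and to compute the latter through the Petri map. Since $|D|$ is complete we have $h^0(\mathcal{O}_C(D))=r+1$, so by the standard infinitesimal description of $W^r_d(C)$ (cf. \cite[Ch.~IV]{ACGH}),
\[ \dim_{[D]}W^r_d(C)\le\dim T_{[D]}W^r_d(C)=g-\operatorname{rank}\mu_0, \]
where $\mu_0\colon H^0(\mathcal{O}_C(D))\otimes H^0(K_C-D)\to H^0(K_C)$ is the cup-product (multiplication) map; note $H^0(K_C-D)\ne 0$ exactly because $|D|$ is special. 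Everything thus reduces to a lower bound for $\operatorname{rank}\mu_0=\dim(\sum_{i=0}^r s_i\cdot H^0(K_C-D))$, computed inside $H^0(K_C)$ for a basis $s_0,\dots,s_r$ of $H^0(\mathcal{O}_C(D))$.

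A first, weaker bound comes from the base point free pencil trick. Since $|D|$ is birationally very ample it is base point free, so a general pencil $\langle s_0,s_1\rangle\subseteq H^0(\mathcal{O}_C(D))$ is base point free, and the kernel of $\mu_0$ restricted to $\langle s_0,s_1\rangle\otimes H^0(K_C-D)$ is isomorphic to $H^0(K_C-2D)$. Hence $\dim(s_0H^0(K_C-D)+s_1H^0(K_C-D))=2h^0(K_C-D)-h^0(K_C-2D)$, which by Riemann--Roch equals $g+2r+1-h^0(\mathcal{O}_C(2D))$; already this yields $\dim_{[D]}W^r_d(C)\le h^0(\mathcal{O}_C(2D))-2r-1$.

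The crux is to gain the remaining $r-1$. Completing to a general basis $s_0,\dots,s_r$ of $H^0(\mathcal{O}_C(D))$, it suffices to prove that for each $j=2,\dots,r$
\[ s_j\cdot H^0(K_C-D)\not\subseteq s_0H^0(K_C-D)+\dots+s_{j-1}H^0(K_C-D), \]
since each such strict inclusion adds at least one dimension to the image of $\mu_0$, giving $\operatorname{rank}\mu_0\ge(g+2r+1-h^0(\mathcal{O}_C(2D)))+(r-1)=g+3r-h^0(\mathcal{O}_C(2D))$ and hence $\dim_{[D]}W^r_d(C)\le h^0(\mathcal{O}_C(2D))-3r$. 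To prove the non-containment I would argue by contradiction: an identity $s_ja=\sum_{i<j}s_ib_i$ holding for all $a\in H^0(K_C-D)$ with the $b_i$ linear in $a$ becomes, after dividing by $s_0$, an identity among rational sections which — because the morphism to $\PP^j$ defined by $s_0,\dots,s_j$ is birational onto its image when $j\ge 2$ (a consequence of the birational very ampleness of $|D|$) — is incompatible with the orders of poles on the two sides unless the ``leading parts'' cancel, and this fails for general $a$; concretely, in a general birational plane model one checks that a general adjoint section of $K_C-D$ cannot vanish along the linear span cut out by $s_0,\dots,s_{j-1}$. I expect this to be the main obstacle: it genuinely uses that $|D|$ induces a birational, not merely finite, morphism, so that the whole $(r+1)$-dimensional space $H^0(\mathcal{O}_C(D))$, and not merely a pencil, acts independently on $H^0(K_C-D)$; the base point free pencil trick alone does not suffice, and for $r\ge 3$ the local analysis requires more care.

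Finally, the closing chain of inequalities is numerical. If $d\ge g$ then $\deg(2D)=2d\ge 2g$, so $\mathcal{O}_C(2D)$ is non-special and Riemann--Roch gives $h^0(\mathcal{O}_C(2D))=2d-g+1$, whence $h^0(\mathcal{O}_C(2D))-3r=2d-3r-g+1$. If $d\le g$ then $h^0(\mathcal{O}_C(2D))\le d+1$ — by Clifford's theorem when $2d\le 2g-2$, and by Riemann--Roch (non-special) when $2d\ge 2g-1$ — whence $h^0(\mathcal{O}_C(2D))-3r\le d-3r+1$. The alternative ``$\dim W^r_d(C)=0$'' simply records the case in which $[D]$ is an isolated point of $W^r_d(C)$; it is forced in particular whenever $h^0(\mathcal{O}_C(2D))=3r$, as already happens for a smooth plane curve together with its hyperplane series.
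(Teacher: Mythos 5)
The paper does not prove this statement at all: Theorem \ref{agh} is quoted from Harris \cite[p.73]{Hr}, so there is no internal proof to compare with, and your attempt has to stand on its own. As written it does not, because the one step that carries all the content is only asserted. Your reduction (bounding $\dim_{[D]}W^r_d$ by the Zariski tangent space $g-\operatorname{rank}\mu_0$, the base-point-free pencil trick giving $\operatorname{rank}\mu_0\ge g+2r+1-h^0(2D)$, and the Clifford/Riemann--Roch endgame) is fine, but the crucial gain of $r-1$, i.e.\ the claim that $s_j\cdot H^0(K_C-D)\not\subseteq s_0H^0(K_C-D)+\dots+s_{j-1}H^0(K_C-D)$ for $j=2,\dots,r$, is ``proved'' only by an appeal to pole orders and ``leading parts'' in a plane model, with the coefficients $b_i$ allowed to depend arbitrarily on $a$ and no actual comparison set up; you yourself flag it as the main obstacle. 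This is precisely where birational very ampleness must enter through a genuine general-position argument, and without it the proof collapses back to the pencil-trick bound $h^0(2D)-2r-1$. Note also that your route, if completed, proves the stronger statement that the tangent space to $W^r_d$ at $[D]$ has dimension at most $h^0(2D)-3r$, which raises rather than lowers the burden on the missing lemma; the classical proofs bound the actual dimension by Castelnuovo-type counts on $|2D|$ using the uniform position of hyperplane sections.

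The gap is repairable, but by a different mechanism than the one you sketch: choose the basis adapted to $r-1$ general points $p_2,\dots,p_r$ of $C$, namely $\langle s_0,\dots,s_{j-1}\rangle=H^0(D-p_j-\dots-p_r)$, so that every element of $s_0H^0(K_C-D)+\dots+s_{j-1}H^0(K_C-D)$ vanishes at $p_j$, while $s_j$ and a general element $b$ of $H^0(K_C-D)$ do not (the $p_j$ being general, they avoid the base locus of $|K_C-D|$); then $s_jb$ does not lie in the previous span simply by evaluation at $p_j$, no pole-order analysis needed. One must then also check that this choice does not spoil the pencil-trick term: the pencil $\langle s_0,s_1\rangle$ now has base divisor $p_2+\dots+p_r$, so its contribution is $2h^0(K_C-D)-h^0(K_C-2D+p_2+\dots+p_r)$, and one needs the general points to impose independent conditions on $|2D|$ (here $h^0(2D)\ge 3r$ for a simple special $g^r_d$, so this is automatic for general reduced points) to recover $h^0(K_C-2D)$. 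Without some argument of this kind — or the uniform-position argument of \cite{Hr} — your write-up establishes only $\dim W^r_d(C)\le h^0(\mathcal{O}_C(2D))-2r-1$, not the stated bound.
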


We will also use the following lemmas that are a simple application of the dimension estimate of multiples of the hyperplane linear system on a curve of degree $d$ in $\PP^r$; cf.  \cite[p.115]{ACGH} or \cite[Chapter 3.a]{Hr}.

\begin{lem}
\label{castelnuovo}
Let $r \ge 3$ and let $C$ be a smooth irreducible non-degenerate curve of degree $d$ and genus $g$ in $\PP^r$. Then
\[ r \le \begin{cases} \frac{d+1}{3} & \mbox{if} \ d \le g \\ \frac{1}{3}(2d-g+1) & \mbox{if} \ d \ge g \end{cases}. \]
\end{lem}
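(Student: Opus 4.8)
The plan is to sandwich $h^0(\mathcal{O}_C(2))$ between a lower bound, coming from a general hyperplane section of $C$, and an upper bound, coming from Clifford's theorem and Riemann--Roch. Write $D$ for the hyperplane divisor class, so $\deg D=d$, and note that, $C$ being non-degenerate, the restriction $H^0(\mathcal{O}_{\PP^r}(1))\to H^0(\mathcal{O}_C(1))$ has $(r+1)$-dimensional image. For a general hyperplane $H$ the section $\Gamma=C\cap H$ consists of $d$ points in general position in $H\cong\PP^{r-1}$, and Castelnuovo's lemma on points in general position --- the dimension estimate for multiples of the hyperplane series recalled before the statement, cf.\ \cite[p.115]{ACGH} --- says that $\Gamma$ imposes at least $\min\{d,2(r-1)+1\}=\min\{d,2r-1\}$ independent conditions on quadrics of $\PP^{r-1}$. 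Since every quadric of $\PP^{r-1}$ extends to one of $\PP^r$, the exact sequence $0\to\mathcal{O}_C(1)\to\mathcal{O}_C(2)\to\mathcal{O}_\Gamma(2)\to 0$ gives
\[ h^0(\mathcal{O}_C(2))\ \ge\ h^0(\mathcal{O}_C(1))+\min\{d,2r-1\}\ \ge\ r+1+\min\{d,2r-1\}, \]
so that $h^0(\mathcal{O}_C(2))\ge 3r$ as soon as $d\ge 2r-1$.

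For the upper bound I would observe that $\mathcal{O}_C(2)$ has degree $2d$, so either it is non-special, in which case $h^0(\mathcal{O}_C(2))=2d-g+1$ by Riemann--Roch, or it is special, in which case Clifford's theorem gives $h^0(\mathcal{O}_C(2))\le d+1$. Since $\deg(K_C-2D)=2g-2-2d<0$ whenever $d\ge g$, the former alternative holds automatically in that range; and when $d\le g$ one has $2d-g+1\le d+1$ regardless. Hence in every case
\[ h^0(\mathcal{O}_C(2))\ \le\ \begin{cases} d+1 & \text{if } d\le g,\\ 2d-g+1 & \text{if } d\ge g, \end{cases} \]
and comparing with $h^0(\mathcal{O}_C(2))\ge 3r$ yields exactly $3r\le d+1$ for $d\le g$ and $3r\le 2d-g+1$ for $d\ge g$.

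It remains to secure the inequality $d\ge 2r-1$ used above, and I expect this to be the delicate point. When $d\le g$ it is automatic: if one had $r\le d\le 2r-2$, then from the Euclidean division $d-1=(r-1)+(d-r)$ Castelnuovo's genus bound would give $g\le\pi(d,r)=d-r<d$, a contradiction. When $d\ge g$ one is left with the range $r\le d\le 2r-2$, where the general-position estimate only gives $h^0(\mathcal{O}_C(2))\ge r+1+d$ --- which, together with $h^0(\mathcal{O}_C(2))=2d-g+1$, merely reproduces the weaker bound $r\le d-g$, and the same occurs if one uses $\mathcal{O}_C(m)$ with $m\ge3$ in place of $\mathcal{O}_C(2)$. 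To handle this range one must invoke sharper input, such as the uniform position of $\Gamma$ or the classification of the curves attaining Castelnuovo's bound; alternatively one notes that it does not arise in the applications of the lemma in the sequel, where the relevant hyperplane series is special and Clifford's theorem applied to $\mathcal{O}_C(1)$ already forces $d\ge 2r$.
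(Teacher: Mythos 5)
Your treatment of the range $d\le g$ is correct and is only a mild variant of the paper's: the paper assumes $r>\frac{d+1}{3}$ and runs the Castelnuovo genus bound through the three possible values $m=\lfloor\frac{d-1}{r-1}\rfloor=1,2,3$ to reach a contradiction, whereas you use the $m=1$ computation ($g\le\pi(d,r)=d-r$ when $r\le d\le 2r-2$) once, to secure $d\ge 2r-1$, and then run the sandwich $3r\le h^0(\mathcal{O}_C(2))\le d+1$ via Clifford/Riemann--Roch. For $d\ge g$ your argument is essentially identical to the paper's, which simply quotes \cite[p.115]{ACGH} for $h^0(\mathcal{O}_C(2))\ge 3r$ and concludes by Riemann--Roch; your hyperplane-section count is exactly what that citation amounts to, so there is no real divergence of method, only that you make the hypotheses of the count explicit.

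The ``delicate point'' you flag is genuine, but it is a defect of the statement (and of the paper's own proof), not of your argument. The general-position count only yields $h^0(\mathcal{O}_C(2))\ge r+1+\min\{d,2r-1\}$, so $h^0(\mathcal{O}_C(2))\ge 3r$ requires $d\ge 2r-1$; for $d\ge g$ this can fail, and in that range the lemma as stated is in fact false: for the rational normal curve ($d=r$, $g=0$, $h^0(\mathcal{O}_C(2))=2r+1<3r$) or the elliptic normal curve ($d=r+1$, $g=1$) one has $3r>2d-g+1$ as soon as $r\ge 3$. So no argument can close the case $g\le d\le 2r-2$; the statement needs an additional hypothesis such as $d\ge 2r-1$ (or speciality of $\mathcal{O}_C(1)$). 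Your closing remark is precisely the right repair and matches how the lemma is actually used in Theorems \ref{r=3} and \ref{r>3}: there it is applied with $r$ replaced by $\alpha=\dim|\mathcal{O}_C(1)|$ and with $\mathcal{O}_C(1)$ special, so Clifford's theorem gives $d\ge 2\alpha$ and the quadric count goes through. In short, your proof is at least as complete as the paper's, and more candid about the one range where both break down.
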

\begin{proof}
Set $m = \lfloor \frac{d-1}{r-1} \rfloor$. Suppose $d \le g$ and assume that $r > \frac{d+1}{3}$, so that $1 \le m \le 3$. If $m = 1$ we have $g \le \pi (d,r) = d - r \le g - r$, a contradiction. If $m = 2$ we get $g \le \pi (d,r) = 2d - 3r + 1 < d \le g$, a contradiction. Then $m = 3$ and $g \le \pi (d,r) = 3d - 6r + 3 \le d - 1 < g$, again a contradiction. Therefore $r \le \frac{d+1}{3}$ when $d \le g$.

Suppose now  $d \ge g$, so that $2d > 2g-2$ and $H^1(\mathcal{O}_C(2)) = 0$. By \cite[p.115]{ACGH} we have
$h^0(\mathcal{O}_C(2)) \geq 3r$, whence, by Riemann-Roch, $2d-g +1 \geq 3r$, that is $r \le \frac{1}{3}(2d-g+1)$.
\end{proof}

In the next result we will use the second and third Castelnuovo bounds $\pi_1(d,r)$ and $\pi_2(d,r)$.

\begin{lem}
\label{castelnuovo2}
Let $r \ge 4$ and let $C$ be a smooth irreducible non-degenerate curve of degree $d$ and genus $g \ge 2$ in $\PP^r$. 
Assume that either 
\begin{itemize}
\item[(i)] $d \ge 2r+1$ and $g > \pi_1(d,r)$ 

or 
\item[(ii)] $C$ is linearly normal, $r \ge 8, d \ge 2r+3$ and either $g > \pi_2(d,r)$ or $g = \pi_1(d,r)$.
\end{itemize} 
Then $C$ admits a degeneration $\{C_t \subset \PP^r \}_{t \in \PP^1}$ to a singular stable curve.
\end{lem}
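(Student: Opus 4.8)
The plan is to use the refined Castelnuovo theory of the genus of curves in $\PP^r$ to force $C$ onto a surface of small degree, and then to degenerate $C$ \emph{inside that surface} to a nodal stable curve. The first step is to invoke the classification results underlying the bounds $\pi_1$ and $\pi_2$ (see \cite[Ch.~III]{ACGH} and \cite{Hr}): under hypothesis (i) the inequality $g>\pi_1(d,r)$ together with $d\ge 2r+1$ forces $C$ to lie on a surface $S\subset\PP^r$ of minimal degree $r-1$; under hypothesis (ii), linear normality together with $r\ge 8$, $d\ge 2r+3$ and $g\ge\pi_2(d,r)+1$ or $g=\pi_1(d,r)$ forces $C$ to lie on a surface $S\subset\PP^r$ of degree $\le r$. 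In either case there is a short explicit list for $S$: rational normal scrolls (smooth, or cones over a rational normal curve) and, when $r=5$, the Veronese surface; and, when $\deg S=r$, additionally a del Pezzo surface (which for $r\ge 8$ is only $\PP^2$ embedded by cubics, or $\PP^1\times\PP^1$ or $\mathbb{F}_1$ embedded anticanonically), a cone over an elliptic normal curve, or the projection of a rational normal scroll of degree $r$ in $\PP^{r+1}$ from a general external point.

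Next I would set up the degeneration uniformly. Let $\sigma\colon\widetilde S\to S$ be the minimal desingularization (so $\widetilde S$ is a Hirzebruch surface, $\PP^2$, or a ruled surface over an elliptic curve), and let $\widetilde C\subset\widetilde S$ be the strict transform of $C$; since $C$ is smooth, $\sigma$ restricts to an isomorphism $\widetilde C\cong C$. An adjunction computation gives $\widetilde C^2\ge 2g-1$, hence $h^1(N_{\widetilde C/\widetilde S})=0$ and $\dim|\widetilde C|\ge 2$, so $|\widetilde C|$ has room to move. The key observation is that for any pencil $\ell\subset|\widetilde C|$ through $[\widetilde C]$, since $[\widetilde C]$ lies off the discriminant locus $\Delta\subset|\widetilde C|$ the line $\ell$ is not contained in $\Delta$, so the general member of $\ell$ is a smooth irreducible curve. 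It therefore suffices to produce one member $\widetilde C_0\in|\widetilde C|$ whose image $\sigma(\widetilde C_0)\subset\PP^r$ is a \emph{singular stable} curve; the pencil through $[\widetilde C]$ and $[\widetilde C_0]$, pushed down by $\sigma$, is then the required family $\{C_t\subset\PP^r\}_{t\in\PP^1}$, with every $C_t$ smooth, irreducible and non-degenerate (non-degeneracy being preserved since $S$ is projectively normal and $\widetilde C_0\sim\widetilde C$).

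Producing $\widetilde C_0$ is the heart of the argument, and I would run through the cases. On a scroll, writing $\widetilde C\sim\alpha C_0+\beta f$, I would take $\widetilde C_0=\widetilde C'\cup f$ with $\widetilde C'$ general in $|\widetilde C-f|$ and $f$ a ruling fibre (an honest line in $\PP^r$, even for a cone): $f$ meets $\widetilde C'$ in $\alpha$ points, so $\widetilde C'\cup f$ is a nodal stable curve of arithmetic genus $g$ as soon as $\alpha\ge 3$. One checks $\alpha\le 2$ is incompatible with the hypotheses — $\alpha=1$ gives $g=0$, while $\alpha=2$ makes $C$ hyperelliptic with $g=d-r$ (which is $\le\pi_1(d,r)$ for $d\ge 2r+1$) for smooth scrolls and $d\le 2r-1$ for cones; when $\alpha=2$ on a smooth scroll one may instead degenerate to $\widetilde C'\cup C_0$, stable because $\widetilde C'\cdot C_0=g+1\ge 3$. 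The Veronese and del Pezzo cases are analogous, degenerating to (plane curve of degree one less)$\,\cup\,$(line), respectively to (residual curve)$\,\cup\,$(a line of the embedding or a ruling of a conic bundle structure), the needed intersection number $\ge 3$ again following from $d$ being large relative to $r$ (for $\PP^1\times\PP^1$ and $\mathbb{F}_1$ one uses $d\ge 2r+3$). The projected scroll reduces to the scroll case on its normalization, and for a cone over an elliptic normal curve the map $C\to E$ has degree $\alpha\ge 1$ with $\alpha=1$ giving $g=1$ and $\alpha=2$ giving $d\le 2r+1$, both excluded, so $\alpha\ge 3$ and one degenerates off a ruling line. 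In any residual edge case (e.g.\ when $|\widetilde C-f|$ has a fixed component contracted by $\sigma$) one instead uses $\dim|\widetilde C|\ge 2$ to degenerate $\widetilde C$ to an \emph{irreducible} one-nodal curve, whose image has geometric genus $g-1\ge 1$ and is therefore stable.

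The hard part will be the first step: extracting from the literature the exact Castelnuovo-type classification of curves with $g>\pi_1$, respectively with $g\ge\pi_2+1$ or $g=\pi_1$, valid precisely in the ranges of (i) and (ii), and checking that the resulting list of surfaces is exhaustive — this is where the restrictions $d\ge 2r+1$, resp.\ $r\ge 8$, $d\ge 2r+3$ and linear normality, genuinely enter. A secondary, more technical difficulty is the bookkeeping needed to guarantee that each chosen degenerate curve is genuinely \emph{stable} (nodal, connected, no rational component with fewer than three special points) rather than merely singular: this means ruling out the hyperelliptic and low-intersection configurations via the genus or degree bound, handling base points of the auxiliary systems $|\widetilde C-f|$, and verifying that no component of $\widetilde C_0$ is contracted by $\sigma$ to a point of $\PP^r$.
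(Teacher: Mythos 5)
Your overall strategy is the same as the paper's: use Castelnuovo theory (\cite[Theorem 3.15]{Hr}, resp.\ \cite{P}) to place $C$ on a surface of degree $r-1$ or $r$, and then degenerate $C$ inside that surface, within its own linear equivalence class, to a nodal stable curve, checking that the components meet in at least $3$ points. The genuine gap is in exactly the subcases where your primary move (splitting off a ruling fiber) breaks down, namely when the residual system $|\widetilde C - f|$ acquires the negative section $C_0$ as a fixed component: on a smooth scroll $X_e$ with $e>0$ and $\widetilde C \sim aC_0 + aef$, $a \ge 3$; on the cone over a rational normal curve when $\widetilde C \cdot C_0 = 0$; and on the elliptic cone when $\widetilde C \cdot C_0 = 0$. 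In the first of these $C_0$ is \emph{not} contracted by $\sigma$, so your stated fallback condition does not even apply, and the curve $\widetilde C' \cup f$ you obtain contains $C_0$ as a rational component meeting the rest of the curve in only $e$ points, hence is not stable for $e \le 2$. In the cone cases your fallback is invoked but not proved: the claim that $\dim |\widetilde C| \ge 2$ produces an irreducible one-nodal member is not automatic, all the more so because these systems contract $C_0$ and are therefore not very ample, so the usual general-tangent-hyperplane (dual variety) argument giving uninodal sections does not apply directly.

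The paper handles precisely these subcases by a different splitting: it writes $\widetilde C \sim a(C_0+ef)$ and degenerates to a general member of $|C_0+ef|$ union a general member of $|(a-1)(C_0+ef)|$, both smooth, irreducible and disjoint from $C_0$, meeting transversally in $e(a-1)$ points, which is $\ge 3$ except for $a=3$, $e=1$, a case excluded because it forces $g=1$. For the elliptic cone with $\widetilde C \cdot C_0 = 0$ the analogous splitting is only available after one proves that the line bundle $M$ with $\widetilde C \sim aC_0 + \pi^{\ast}M$ satisfies $M \cong \mathcal{O}_E(a)$; the paper does this by an $h^0$ computation via $\pi_{\ast}$, showing that otherwise $C_0$ would be a base component of $|\widetilde C|$, contradicting irreducibility of $\widetilde C$. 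You omit this identification entirely, and without it (or a proof of existence of irreducible uninodal members in these non-very-ample systems) the scroll-with-$b=ae$ and the two cone subcases remain unproven. Two minor further points: your list of degree-$r$ surfaces omits the anticanonical image of $X_2$ (treated in the paper by the same cone-type argument), and the projected scroll you include is in fact ruled out by linear normality of $C$, since a curve of degree $d \ge 2r+3$ on it would lift isomorphically to a non-degenerate curve in $\PP^{r+1}$, giving $h^0(\mathcal{O}_C(1)) \ge r+2$.
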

\begin{proof}
Under hypothesis (i) it follows by \cite[Theorem 3.15]{Hr} that $C$ lies on a surface of degree $r-1$ in $\PP^r$. Under hypothesis (ii) it follows by
\cite[Theorem 2.10]{P} or \cite[Theorem 3.15]{Hr} respectively, that $C$ lies on a surface of degree $r-1$ or $r$ in $\PP^r$.

To do the case of the surface of degree $r-1$, we recall the following notation (see \cite[\S 5.2]{Ht}). Let $e \ge 0$ be an integer and let $\mathcal{E} = \mathcal{O}_{\PP^1} \oplus \mathcal{O}_{\PP^1}(-e)$. On the ruled surface $X_e = \PP \mathcal{E}$, let $C_0$ be a section in $|\mathcal{O}_{\PP \mathcal{E}}(1)|$ and let $f$ be a fiber. Then any curve $D \sim aC_0 + bf$ has arithmetic genus $\frac{1}{2}(a-1)(2b - ae - 2)$. Write $r-1=2n-e$ for some $n \ge e$ and let $S_{n,e}$ be the image of $X_e$ under the linear system $|C_0 + nf|$. This linear system embeds $X_e$ when $n > e$, while, when $n=e$, it contracts $C_0$ to a point and is an isomorphism elsewhere, thus $S_{e,e}$ is a cone. As is well-known (see for example \cite[Proposition 3.10]{Hr}), every irreducible surface of degree $r-1$ in $\PP^r$ is either the Veronese surface $v_2(\PP^2) \subset \PP^5$ or an $S_{n,e}$. 

If $C \subset v_2(\PP^2)$ then $C \sim a Q$ where $Q$ is a conic and $d = 2a \ge 11$, so that $a \ge 6$. Let $C_1$ be general in $|(a-1)Q|$ and let $C_2$ be general in $|Q|$. Now $C_1$ and $C_2$ are smooth irreducible, $C_1 \cdot C_2 = a-1 \ge 5$ and they intersect transversally. Therefore $C$ specializes to the singular stable curve $C_1 \cup C_2$. 

Now suppose that $C \subset S_{n,e}$. 

We deal first with the case $n > e$. 

We have $C \sim aC_0 + bf$ for some integers $a, b$ such that $a \ge 2$ (because $g \ge 2$) and, as $C$ is smooth irreducible, we get by \cite[Corollary V.2.18(b)]{Ht} that either $e=0, b \ge 2$ (if $b = 1$ then $g = 0$) or $e > 0, b \ge ae$. Consider first the case $a \ge 3$. If $e=0$ or if $e > 0$ and $b > ae$, let $C_1$ be general in $|C - f|$ and $C_2$ be a general fiber. As above $C_1$ and $C_2$ are smooth irreducible, intersect transversally and $C_1 \cdot C_2 = a \ge 3$. Therefore $C_1 \cup C_2$ is a singular stable curve and $C$ specializes to it. If $e > 0$ and $b = ae$ let $C_1$ be general in $|C_0 + ef|$ and $C_2$ be general in $|(a-1)(C_0 + ef)|$. As above both $C_1$ and $C_2$ are smooth irreducible, intersect transversally and $C_1 \cdot C_2 = e (a-1) \ge 3$ unless $a=3, e=1, b=3$, which is not possible since then $g=1$. Therefore $C_1 \cup C_2$ is a singular stable curve and $C$ specializes to it. Now suppose that $a = 2$. Then $g = b-e-1 \ge 2$ and therefore $b \ge e+3$. Let $C_1 = C_0$ and let $C_2$ be general in $|C_0 + bf|$. As above $C_1$ and $C_2$ are smooth irreducible, intersect transversally and $C_1 \cdot C_2 = b-e \ge 3$. Therefore $C_1 \cup C_2$ is a singular stable curve and $C$ specializes to it. This concludes the case $n > e$. 

If $n = e$ let $\widetilde{C}$ be the strict transform of $C$ on $X_e$. Then $C \cong \widetilde{C} \sim aC_0 + bf$ for some integers $a, b$. We get $d = \widetilde{C} \cdot (C_0 + ef) = b$ and, as $C$ is smooth, $d-ae = \widetilde{C} \cdot C_0 = 0, 1$. Since $e = r-1$, setting $\eta = 0, 1$ we get $d = a(r-1) + \eta$. Note that if $a \le 2$ we have $d \le 2r-1$, a contradiction. Hence $a \ge 3$. If $\eta = 1$ let $\widetilde{C}_1$ be general in $|\widetilde{C} - f|$ and $\widetilde{C}_2$ be a general fiber. As above $\widetilde{C}_1$ and $\widetilde{C}_2$ are smooth irreducible, intersect transversally and $\widetilde{C}_1 \cdot \widetilde{C}_2 = a \ge 3$. Therefore $\widetilde{C}_1 \cup \widetilde{C}_2$ is a singular stable curve and $\widetilde{C}$ specializes to it. On the other hand $\widetilde{C}, \widetilde{C}_1$ and $\widetilde{C}_2$ get mapped isomorphically in $\PP^r$, therefore also $C$ specializes to a singular stable curve. If $\eta = 0$ let $\widetilde{C}_1$ be general in $|C_0 + ef|$ and $\widetilde{C}_2$ be general in $|(a-1)(C_0 + ef)|$. Again $\widetilde{C}_1$ and $\widetilde{C}_2$ are smooth irreducible, intersect transversally, $\widetilde{C}_1 \cdot \widetilde{C}_2 = (a-1)(r-1) \ge 6$ and they get mapped isomorphically in $\PP^r$, therefore $C$ specializes to a singular stable curve image of $\widetilde{C}_1 \cup \widetilde{C}_2$.

This concludes the case of the surface of degree $r-1$.

We now consider the case $r \ge 8, d \ge 2r+3$, $C$ is linearly normal and lies on a surface of degree $r$ in $\PP^r$. By a classical theorem of del Pezzo and Nagata (see \cite[Thorem 8]{N}) we have that such a surface is either a cone over an elliptic normal curve in $\PP^{r-1}$ or the 3-Veronese surface $v_3(\PP^2) \subset \PP^9$ or the image of $X_e, e = 0, 1, 2$ with the linear system
$|2C_0 + 2f|, |2C_0 + 3f|$ or $|2C_0 + 4f|$ respectively. The case $v_3(\PP^2)$ is done exactly as the case $v_2(\PP^2)$ above, while the cases $e=0,1$ are done exactly as the case $S_{n,e}, n>e$ above, since the linear systems $|2C_0 + 2f|, |2C_0 + 3f|$ are very ample and therefore a degeneration on $X_e$ gives a degeneration in $\PP^8$. In the case $e = 2$ let $\widetilde{C}$ be the strict transform of $C$ on $X_2$. Then $C \cong \widetilde{C} \sim aC_0 + bf$ for some integers $a, b$. We get $d = \widetilde{C} \cdot (2C_0 + 4f) = 2b$ and, as $C$ is smooth, $b-2a = \widetilde{C} \cdot C_0 = \eta$. Also if $a \le 2$ we have $d \le 10$, a contradiction. Hence $a \ge 3$. Now exactly as in the case $n=e$ above we conclude that 
$C$ specializes to a singular stable curve.

It remains to do the case when $C$ is contained in the cone over an elliptic normal curve in $\PP^{r-1}$. Let $E \subset \PP^{r-1}$ be a linearly normal smooth irreducible elliptic curve of degree $r$, set $\mathcal{E} = \mathcal{O}_E \oplus  \mathcal{O}_E(-1)$ and let $\pi : \PP \mathcal{E} \to E$ be the standard map. By \cite[Example V.2.11.4]{Ht} the cone is the image of $\PP \mathcal{E}$ under the linear system $|C_0 + \pi^{\ast} \mathcal{O}_E(1)|$, which contracts $C_0$ to the vertex and is an isomorphism elsewhere. In particular it follows that $C_0 + \pi^{\ast} \mathcal{O}_E(1)$ is big and base-point-free. Let $\widetilde{C}$ be the strict transform of $C$ on $\PP \mathcal{E}$, so that $C \cong \widetilde{C} \sim aC_0 + \pi^{\ast} M$ for some integer $a$ and some line bundle $M$ on $E$ of degree $b$. As before we have $d = \widetilde{C} \cdot (C_0 + rf) = b$ and, as $C$ is smooth, $d-ar = \widetilde{C} \cdot C_0 = \eta$, so that $a \ge 3$. 

Assume that $\eta = 0$. We claim that $M \cong \mathcal{O}_E(a)$. In fact if $M \not \cong \mathcal{O}_E(a)$ we compute
\[ h^0(\PP \mathcal{E}, aC_0 + \pi^{\ast} M) = h^0(E, \pi_{\ast}(aC_0 + \pi^{\ast} M)) = \]
\[ = h^0((\Sym^a\mathcal{E}) \otimes M) = h^0(\bigoplus\limits_{i=0}^a M(-i)) = \sum\limits_{i=0}^{a-1} (a-i) r \]
while
\[ h^0(\PP \mathcal{E}, (a-1)C_0 + \pi^{\ast} M) = h^0(\bigoplus\limits_{i=0}^{a-1} M(-i)) = \sum\limits_{i=0}^{a-1} (a-i) r \]
and therefore the linear system $|aC_0 + \pi^{\ast} M|$ has $C_0$ as base component. But $\widetilde{C}$ is irreducible and $\widetilde{C} \neq C_0$, whence a contradiction. Hence $M \cong \mathcal{O}_E(a)$ and $\widetilde{C} \sim a(C_0 +  \pi^{\ast} \mathcal{O}_E(1))$. Let $\widetilde{C}_1$ be general in $|C_0 +  \pi^{\ast} \mathcal{O}_E(1)|$ and let $\widetilde{C}_2$ be general in $|(a-1)(C_0 +  \pi^{\ast} \mathcal{O}_E(1))|$. Now $\widetilde{C}_1$ and $\widetilde{C}_2$ are smooth irreducible by Bertini's theorem, intersect transversally and $\widetilde{C}_1 \cdot \widetilde{C}_2 = (a-1) r \ge 16$. Therefore $\widetilde{C}_1 \cup \widetilde{C}_2$ is a singular stable curve and $\widetilde{C}$ specializes to it. On the other hand $\widetilde{C}, \widetilde{C}_1$ and $\widetilde{C}_2$ get mapped isomorphically in $\PP^r$, therefore also $C$ specializes to a singular stable curve.

Finally let us do the case $\eta = 1$. Then $M(-a)$ has degree $1$ and thefore there is a point $P \in E$ such that $M \cong \mathcal{O}_E(a)(P)$. Let $F = \pi^{\ast}(P)$ be a fiber and let $\widetilde{C}_1$ be general in $|\widetilde{C} - F|$. Note that $\widetilde{C} - F \sim a(C_0 +  \pi^{\ast} \mathcal{O}_E(1))$ and therefore $\widetilde{C}_1$ is smooth irreducible. Again $\widetilde{C}_1$ and $\widetilde{C}_2$ intersect transversally and $\widetilde{C}_1 \cdot \widetilde{C}_2 = a \ge 3$. Hence $\widetilde{C}_1 \cup \widetilde{C}_2$ is a singular stable curve and $\widetilde{C}$ specializes to it. Also $\widetilde{C}, \widetilde{C}_1$ and $\widetilde{C}_2$ get mapped isomorphically in $\PP^r$, therefore also $C$ specializes to a singular stable curve.
\end{proof}

\section{Irreducibility of $\mathcal{H}_{g+1,g,3}$ for small genus $g$}
 
The irreducibility of  $\mathcal{H}_{g+1,g,3}$ has been known for $g\ge 9$; cf.  \cite[Theorem 2.6 and Theorem 2.7]{K}. In this section we prove that  any non-empty $\mathcal{H}_{g+1,g,3}$ is irreducible of expected dimension for $g\le 8$, whence for all $g$ without any restriction on the genus $g$. 

\begin{prop}
\label{ire} 
$\mathcal{H}_{g+1,g,3}$ is irreducible of expected dimension $4(g+1)$ if $g\ge 6$ and is empty if $g\le 5$. Moreover, $\dim\pi(\mathcal{H}_{g+1,g,3})=3g-3$ if $g\ge 8$, $\dim\pi(\mathcal{H}_{8,7,3})=17$ and $\dim\pi(\mathcal{H}_{7,6,3})=13$.

\end{prop}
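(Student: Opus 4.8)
The plan is to use the residuation $L\mapsto A:=K_C\otimes L^{-1}$, which trades a $g^3_{g+1}$ for a pencil of degree $g-3$, and to deduce everything from the classical irreducibility of the Hurwitz scheme.

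\smallskip
\emph{Numerics and the dictionary.} By Lemma \ref{castelnuovo}, a smooth irreducible non-degenerate curve of degree $g+1$ and genus $g$ can lie in $\PP^r$ only if $r\le\frac{1}{3}\bigl(2(g+1)-g+1\bigr)=\frac{g+3}{3}$. For $g\le5$ this gives $r\le2$, so $\mathcal H_{g+1,g,3}=\emptyset$. For $g\ge9$, irreducibility is \cite{K}; and since $\rho(g+1,g,3)=g-8\ge0$ for $g\ge8$, Proposition \ref{principal} provides a component of $\mathcal G$ dominating $\mathcal M_g$, so that $\mathcal H_{g+1,g,3}\ne\emptyset$ and $\dim\pi(\mathcal H_{g+1,g,3})=3g-3$. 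It remains to treat $6\le g\le8$. There, if $c\in\mathcal H_{g+1,g,3}$ corresponds to $C\subset\PP^3$ with hyperplane bundle $L$ then $h^0(L)\ge4$ and, since $\frac{g+3}{3}<4$, a $g^4_{g+1}$ is excluded, so $h^0(L)=4$ and $A:=K_C\otimes L^{-1}$ is a complete $g^1_k$ with $k:=g-3\in\{3,4,5\}$; conversely $|K_C\otimes A^{-1}|$ is a $g^3_{g+1}$ for every complete $g^1_k$. Hence $(C,L)\mapsto(C,K_C\otimes L^{-1})$ is an isomorphism from the open subset $\mathcal G^{\mathrm{va}}$ of very ample $g^3_{g+1}$'s in $\mathcal G^3_{g+1}$ (these are automatically complete) onto the open subset $\{(C,A):K_C\otimes A^{-1}\ \text{very ample}\}$ of the parameter space $\mathcal P$ of pairs $(C,A)$, $A$ a complete $g^1_k$ on a smooth genus $g$ curve $C$; recall that the smooth locus of $\mathcal H_{g+1,g,3}$ is a $\PP GL(4)$-bundle over $\mathcal G^{\mathrm{va}}$ and that the components of $\mathcal H_{g+1,g,3}$ correspond to those of $\mathcal G$.

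\smallskip
\emph{Structure of $\mathcal P$.} Let $\mathcal P_0\subset\mathcal P$ be the closure of the locus of $(C,A)$ with $A$ base-point-free and with indecomposable associated covering $C\to\PP^1$. This is the image, modulo $\PP GL(2)$, of the Hurwitz scheme of simply branched degree-$k$ coverings of $\PP^1$ by smooth genus $g$ curves, hence irreducible by the theorem of Clebsch and Hurwitz, of dimension $(2g-2+2k)-3=4g-11$ (the number of branch points minus $\dim\PP GL(2)$). Every other component of $\mathcal P$ is strictly smaller: if $A$ has base locus of degree $b\ge1$ then $C$ carries a base-point-free pencil of degree $k-b$, and the corresponding stratum has dimension at most $\min\{3g-3,\,2g+2(k-b)-5\}+\max\{0,\rho(k-b,g,1)\}+b<4g-11$; and if $A$ is base-point-free but the covering decomposes --- which, $k$ being composite only for $g=7$, forces $C$ bielliptic --- the stratum has dimension at most $14<17$.

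\smallskip
\emph{Conclusion.} One checks $\mathcal G^{\mathrm{va}}\ne\emptyset$, hence $\mathcal H_{g+1,g,3}\ne\emptyset$: for $g=8$ this is Proposition \ref{principal}; for $g=6$ any trigonal curve works, since a trigonal genus $6$ curve has no $g^2_5$ (it would be a smooth plane quintic, which is not trigonal), whence by Riemann--Roch $K_C\otimes A^{-1}$ is base-point-free and very ample; for $g=7$ a general tetragonal curve with its $g^1_4$ has neither a $g^2_5$ nor a $g^2_6$ --- either would make $C$ trigonal or hyperelliptic, or birational to a plane curve lying in a family of dimension $<17=\dim(\text{tetragonal locus in }\mathcal M_7)$ --- so again $K_C\otimes A^{-1}$ is very ample. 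Thus $\mathcal G^{\mathrm{va}}$ corresponds, under the isomorphism above, to a non-empty open subset of $\mathcal P$. By Theorem \ref{expecteddim} every component $\mathcal G'$ of $\mathcal G$ has $\dim\mathcal G'\ge\lambda(g+1,g,3)-15=4g-11$, and it embeds (on its dense open $\mathcal G^{\mathrm{va}}\cap\mathcal G'$) into $\mathcal P$; as $\mathcal P_0$ is the unique component of $\mathcal P$ of dimension $\ge4g-11$, this forces $\mathcal G'$ to map birationally onto $\mathcal P_0$. Since the isomorphism is in particular injective, two distinct such components would coincide over a dense open of $\mathcal P_0$, which is impossible; so $\mathcal H_{g+1,g,3}$ is irreducible of dimension $(4g-11)+15=4(g+1)$. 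Finally $\pi(\mathcal H_{g+1,g,3})$ is the image of $\mathcal P_0$ in $\mathcal M_g$: all of $\mathcal M_8$ when $g=8$ (consistent with $\dim\pi=3g-3$ above), the tetragonal locus of $\mathcal M_7$ when $g=7$, of dimension $17$, and the trigonal locus of $\mathcal M_6$ when $g=6$, of dimension $13$, the general such curve carrying in the last two cases only finitely many pencils of degree $k$.

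\smallskip
The step I expect to be the main obstacle is the verification that $K_C\otimes A^{-1}$ is very ample for suitable $(C,A)$ --- i.e. ruling out low-degree plane models on a general $(g-3)$-gonal curve --- together with the dimension bookkeeping showing that all base-pointed and decomposable strata of $\mathcal P$ have dimension $<4g-11$; the irreducibility of the Hurwitz scheme then carries the rest of the argument.
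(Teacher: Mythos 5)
Your route is genuinely different from the paper's, and its skeleton is sound. The paper argues case by case: for $g=6$ it uses the same residuation idea you do (identifying $\mathcal{G}$ with the trigonal locus via $|K_C-g^3_7|=g^1_3$); for $g=7$ it works projectively, showing $C$ is residual to a line in a pencil of cubics, so that $\mathcal{H}_{8,7,3}$ is a $\GG(1,15)$-bundle over $\GG(1,3)$, and then computes $\dim\pi$ by proving $\dim W^1_4(C)=0$ via Mumford's theorem; for $g=8$ it invokes the method of \cite{K} together with the genus-$8$ input of Mukai and Ballico et al. Your uniform argument (completeness of the hyperplane series from Lemma \ref{castelnuovo}, residuation to complete pencils of degree $g-3$, irreducibility of the simply branched Hurwitz space, plus the lower bound $\dim\mathcal{G}'\ge\lambda-15=4g-11$ from Theorem \ref{expecteddim} and injectivity of residuation) replaces all of this, and in particular would give $\mathcal{H}_{9,8,3}$ irreducible without the machinery of \cite{K} or the Mukai--Ballico result; this is a real gain, made possible by the fact that for $g\le 8$ the pencil degree $g-3\le 5$ is small enough that all competing strata are visibly of dimension $<4g-11$.

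Three points need repair before the argument is complete. First, the base-point-free indecomposable locus is \emph{not} the image of the simply branched Hurwitz scheme: it also contains covers with non-simple branching, so your $\mathcal{P}_0$ (closure of that locus) is not a priori irreducible. Either invoke the classical deformation of an arbitrary connected cover to a simply branched one, or simply add the non-simply-branched stratum to your list of small strata: a cover with non-simple branching has at most $2g+2k-3$ branch points, so that stratum has dimension at most $2g+2k-6=4g-12<4g-11$, and the containment of each component's image in the closure of the simply branched locus follows by irreducibility of that image. Second, for $g=7$ a decomposable degree-$4$ pencil forces only a factorization $C\to X\to\PP^1$ through a double cover, where $X$ may be rational, elliptic, or hyperelliptic of genus up to $4$; your ``forces $C$ bielliptic'' and the bound ``$\le 14$'' are wrong as stated (the hyperelliptic-based stratum already has dimension $15$), though every such stratum still has dimension $<17$, so the conclusion survives. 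Third, the final assertion that the general curve in the image carries only finitely many $g^1_4$'s (needed to get $\dim\pi(\mathcal{H}_{8,7,3})=17$ rather than $\ge 16$) is not free: Martens' theorem only gives $\dim W^1_4\le 1$, and you need Mumford's refinement (\cite[p.193]{ACGH}), exactly as the paper uses it, combined with your own exclusion of $g^2_5$ and $g^2_6$ on the general tetragonal curve, to conclude $\dim W^1_4(C)=0$. With these patches the proposal works.
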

\begin{proof} By the Castelnuovo genus bound, one can easily see that there is no smooth non-degenerate curve in $\PP^3$ of degree $g+1$ and genus $g$ if $g\le 5$. Hence $\mathcal{H}_{g+1,g,3}$ is empty for $g\le 5$. We now treat separately the other cases. 

\noindent (i) $g=6$: A smooth curve $C$ of genus $6$ with a very ample $g^3_7$ is trigonal; $|K-g^3_7|=g^1_3$. Furthermore,  $C$ has a unique trigonal pencil  by Castelnuovo-Severi inequality and the $g^3_7$ is unique as well. Conversely a trigonal curve of genus $6$ has a unique trigonal pencil and the residual series $g^3_7=|K-g^1_3|$ is very ample which is the unique $g^3_7$. Hence $\mathcal{G}\subset\mathcal{G}^3_7$ is birationally equivalent to the irreducible locus of trigonal curves $\mathcal{M}^1_{g,3}$. Therefore it follows that $\mathcal{H}_{7,6,3}$ is irreducible which is a $\PP GL(4)$-bundle over  the irreducible  locus $\mathcal{M}^1_{g,3}$ and $\dim\mathcal{H}_{7,6,3}=\dim\mathcal{M}^1_{g,3}+\dim \PP GL(4)=(2g+1)+15=28$ . 

\noindent (ii) $g=7$: First we note that a smooth curve $C$ of degree $8$ in $\PP^3$ of genus $7$ does not lie on a quadric surface; there is no integer solution to the equation $a+b=8$,  $(a-1)(b-1)=7$ assuming $C$ is of type $(a,b)$ on a quadric surface. We then claim that $C$ is residual to a line in a complete intersection of two cubic surfaces; from the exact sequence
 $0\rightarrow \mathcal{I}_C(3)\rightarrow\mathcal{O}_{\PP^3}(3)\rightarrow\mathcal{O}_C(3)\rightarrow 0$, one sees that $h^0(\PP^3, \mathcal{I}_C(3))\ge 2$ and hence $C$ lies on two irreducible cubics. Note that $\deg C=g+1=8=3\cdot 3-1$ and therefore $C$ is a curve residual to a line in a complete intersection of two cubics, that is  $C\cup L= X$ where L is a line and $X$ is a complete intersection of two cubics.  Upon fixing a line $L\subset\PP^3$,  we consider the linear system $\mathcal{D}=\PP(H^0(\PP^3, \mathcal{I}_L(3)))$ consisting of cubics containing the line $L$. Note that any $4$ given points on $L$ impose independent conditions on cubics and hence $\dim\mathcal{D}=\dim\PP(H^0(\PP^3,\mathcal{O}(3)))-4=19-4=15$. Since our curve $C$ is completely determined by a pencil of cubics containing a line $L\subset \PP^3$, we see that $\mathcal{H}_{8,7,3}$ is a $\GG(1,15)$ bundle over $\GG(1,3)$, the space of lines in $\PP^3$. Hence $\mathcal{H}_{8,7,3}$ is irreducible of dimension $\dim \GG(1,15)+\dim\GG(1,3)=28+4=32=4\cdot 8$. By taking the residual series $|K_C-g^3_8|=g^1_4$ of a very ample $g^3_8$, we see that $\mathcal{H}_{8,7,3}$ maps into the irreducible closed locus $\mathcal{M}^1_{g,4}$  consisting $4$-gonal curves, which is of dimension $2g+3$. We also note that $\dim W^3_8(C)=\dim W^1_4(C)=0$. For if $\dim W^1_4(C) \ge 1$, then $C$ is either trigonal, bielliptic or a smooth plane quintic by Mumford's theorem; cf. \cite[p.193]{ACGH}. Because $g=7$, $C$ cannot be a smooth plane quintic. If $C$ is trigonal with the trigonal pencil $g^1_3$, one may deduce that  $|g^1_3+g^1_5|$ is our very ample $g^3_8$ by the base-point-free pencil trick \cite[p.126]{ACGH};   $\ker\nu\cong H^0(C, \mathcal{F}\otimes\mathcal{L}^{-1})$ where 
$\nu :H^0(C, \mathcal{F})\otimes H^0(C, \mathcal{L})\rightarrow H^0(C, \mathcal{F}\otimes\mathcal{L})$  is the natural cup-product map with $\mathcal{F}=g^3_8$, $\mathcal{L}=g^1_3$ and $\mathcal{F}\otimes\mathcal{L}^{-1}$ turns out to be a $g^1_5$. 
Therefore it follows that $C$ is a smooth curve of type $(3,5)$ on a smooth quadric in $\PP^3$. However, we have already ruled out the possibility for $C$ lying on a quadric. If $C$ is  bi-elliptic with a two sheeted map $\phi : C \rightarrow E$ onto an elliptic curve $E$, one sees that $|K-g^3_8|=g^1_4=|\phi^*(p+q)|$ by Castelnuovo-Severi inequality and hence $g^3_8=|K-\phi^*(p+q)|$ where $p, q\in E$. Therefore for any $r\in E$, we have $|g^3_8-\phi^*(r)|=|K-\phi^*(p+q+r)|=|K-g^2_6|=g^2_6$ 
 whereas $g^3_8$ is very ample, a contradiction. Furthermore we see that $\mathcal{H}_{8,7,3}$ dominates the locus $\mathcal{M}^1_{g,4}$, for otherwise the inequality $\dim\pi (\mathcal{H}_{8,7,3})<\dim\mathcal{M}^1_{4,g}=2g+3$ which would lead to the inequality $$\hskip 24pt\dim\mathcal{H}_{8,7,3}=32\le\dim \PP GL(4)+\dim W^3_8(C)+\dim\pi (\mathcal{H}_{8,7,3})<15+17,$$ which is an absurdity. 

\noindent (iii) $g=8$:  Since we have the non-negative Brill-Noether number $\rho (d,g,3)=\rho (9,8,3)=0$, there exists the principal component of $\mathcal{H}_{9,8,3}$ dominating $\mathcal{M}_8$ by Proposition \ref{principal}. Because  almost the same argument as in the proof of  \cite[Theorem 2.6 ]{K} works for this case, we provide only the essential ingredient 
and important issue
adopted for our case $g=8$.   Indeed, the crucial step in the proof of \cite[Theorem 2.6 ]{K} was \cite[Lemma 2.4 ]{K} (for a given $g\ge 9$) in which the author used a rather strong result 
\cite[Lemma 2.3 ]{K}; e.g. if  $\dim W^1_5(C)=1$ on a fixed curve $C$ of genus $g=9$ then $\dim W^1_4(C)=0$. However a similar statement for $g=8$ was not known at that time. In other words, it was not clear at all that  the condition $\dim W^1_5(C)=1$ would imply $\dim W^1_4(C)=0$ for a curve $C$ of genus $g=8$. However by the results of Mukai \cite{M} and Ballico et al. \cite[Theorem 1]{B} it has been shown that the above statement holds for a curve of genus $8$. Therefore the same proof as in \cite[Lemma 2.4, Theorem 2.6]{K} works (even without changing any paragraphs or notation therein). The authors apologize for not being kind enough to provide a full proof; otherwise  this article may become unnecessarily lengthy and tedious. \qedhere
\end{proof}

\section{Non-existence of components of $\mathcal{H}_{d,g,3}$ rigid in moduli}

In this section, we give a strictly positive lower bound for the dimension of the image $\pi (\mathcal{Z})$ of an irreducible component $\mathcal{Z}$ of the Hilbert scheme  $\mathcal{H}_{d,g,3}$ under the natural map 
$\pi: \mathcal{H}_{d,g,r} \dashrightarrow \mathcal{M}_g$,  which will in turn imply  that $\mathcal{H}_{d,g,3}$ has no components rigid in moduli. The non-existence of a such component of $\mathcal{H}_{d,g,3}$ has certainly been known to some people (e.g. cf.  \cite[p.3487]{L2}). However the authors could not find an adequate source of a proof in any literature.  
\vskip 4pt

We start with the following fact about the irreducibility of $\mathcal{H}_{d,g,3}$ which has been proved by Ein \cite[Theorem 4]{E} and Keem-Kim \cite[Theorems 1.5 and 2.6]{KK}. 

\begin{thm}
\label{irredrange}
$\mathcal{H}_{d,g,3}$ is irreducible for $d \ge g+3$ and for $d = g+2, g \ge 5$.
\end{thm}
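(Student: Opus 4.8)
The plan is to reduce the assertion to a statement about the variety of linear series and then to rule out superfluous components by a dimension count driven by the Accola--Griffiths--Harris bound. Since the smooth locus of $\mathcal{H}_{d,g,3}$ is a $\PP GL(4)$-bundle over an open subset of $\mathcal{G}\subset\mathcal{G}^3_d$, it suffices to prove that $\mathcal{G}$ is irreducible. Here $\rho(d,g,3)=4d-3g-12$ equals $g$ when $d\ge g+3$ and equals $g-4$ when $d=g+2$, so in the range under consideration one has $\rho\ge 0$, and moreover $\rho\ge 1$ when $d=g+2$ and $g\ge 5$ (the case $g=4$, $d=6$ being the classical one of canonical genus $4$ curves). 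By Proposition~\ref{principal} there is then a unique component $\mathcal{G}_0$ of $\mathcal{G}$ dominating $\mathcal{M}_g$, and since a component is irreducible by definition, everything reduces to showing that every component of $\mathcal{G}$ dominates $\mathcal{M}_g$; equivalently, that there is no \emph{exceptional} component, i.e.\ no component $\mathcal{Z}$ of $\mathcal{H}_{d,g,3}$ with $\dim\pi(\mathcal{Z})\le 3g-4$.

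Suppose $\mathcal{Z}$ is exceptional and let $C\subset\PP^3$ be a general member. By Proposition~\ref{principal} the hyperplane series of $C$ is special; let $|D|$ be the complete linear system cutting it out, so $|D|$ is a special $g^s_d$ with $h^0(\mathcal{O}_C(D))=s+1\ge 4$, and it is very ample (hence birationally very ample) because the very ample hyperplane series is a subseries of it. As $d\ge g+2>g$, Theorem~\ref{agh} bounds $\dim W^s_d(C)$ near $|D|$ by $2d-3s-g+1$. For a general $[C]\in\pi(\mathcal{Z})$ the fibre of $\pi$ over $[C]$ is $\PP GL(4)$ times the family of very ample $g^3_d$'s on $C$ occurring in $\mathcal{Z}$, and this family maps to the locus $\{h^0=s+1\}\subset W^s_d(C)$ with $4(s-3)$-dimensional fibres (the choices of a three-dimensional subseries); hence, using Theorem~\ref{expecteddim},
\[
4d=\lambda(d,g,3)\le\dim\mathcal{Z}\le 15+(2d-3s-g+1)+4(s-3)+(3g-4)=2d+s+2g ,
\]
so $s\ge 2(d-g)\ge 4$. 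In particular $C$ is not linearly normal, and by the Castelnuovo bound (Lemma~\ref{castelnuovo}, case $d\ge g$) we also have $s\le\tfrac13(2d-g+1)$, which together with $s\ge 2(d-g)$ forces $4d\le 5g+1$. Therefore, whenever $4d>5g+1$ there is no exceptional component and $\mathcal{H}_{d,g,3}$ is irreducible; this already settles every $d\ge g+3$ with $g\le 10$ and every case $d=g+2$ with $g\le 6$.

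What remains is the narrow borderline range $4d\le 5g+1$ --- namely $d=g+2$ with $g\ge 7$, and $g\ge 11$ with $g+3\le d\le\tfrac{5g+1}{4}$ --- in which the count above still permits an exceptional $\mathcal{Z}$ whose general member would be a non-linearly-normal curve $C\subset\PP^3$, the generic projection of a linearly normal $\widetilde{C}\subset\PP^s$ with $4\le s\le\tfrac13(2d-g+1)$. Controlling these curves is the main obstacle, and it is precisely what the sharper arguments of Ein \cite{E} and Keem--Kim \cite{KK} accomplish: one bounds the finitely many admissible values of $s$, pins down the component of $\mathcal{H}_{d,g,s}$ containing $\widetilde{C}$ through a finer study of the special linear series involved (the Clifford index and gonality of $C$, the local structure of $W^s_d$), and shows either that the family of such projections has dimension strictly less than $\lambda(d,g,3)$ --- so that it is not a component --- or that $\widetilde{C}$, and hence $C$, specializes (in the spirit of Lemma~\ref{castelnuovo2}) into the closure of the principal component. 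Granting this, every component of $\mathcal{H}_{d,g,3}$ dominates $\mathcal{M}_g$, whence $\mathcal{G}=\mathcal{G}_0$ is irreducible and the theorem follows.
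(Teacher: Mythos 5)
The first thing to note is that the paper does not prove Theorem \ref{irredrange} at all: it is quoted as a known result of Ein \cite[Theorem 4]{E} and Keem--Kim \cite[Theorems 1.5 and 2.6]{KK}, so there is no internal proof to compare yours with. Your dimension count is essentially the same kind of argument the paper runs later, in the proof of Theorem \ref{r=3}, where Theorem \ref{irredrange} is an \emph{input} rather than a consequence; and indeed your count is correct as far as it goes: with $s\geq 3$ the combination of the bound $\dim\pi(\mathcal{Z})\leq 3g-4$, the Grassmannian fibre dimension $4(s-3)$, Theorem \ref{agh} and Lemma \ref{castelnuovo} does yield $4d\leq 5g+1$ for an exceptional component (one small point: Theorem \ref{agh} is a dichotomy, ``$\dim W^s_d(C)=0$ or the bound holds'', and you apply the bound unconditionally; as it happens the case $\dim W^s_d(C)=0$ gives $s\geq d-\frac{3g-1}{4}$ and the same conclusion $4d\leq 5g+1$, but this should be said).

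The genuine gap is that the range you are left with, $4d\leq 5g+1$ --- i.e.\ all of $d=g+2$ with $g\geq 7$ and the whole strip $g+3\leq d\leq\frac{5g+1}{4}$ for $g\geq 11$ --- is not a borderline residue: it is the actual content of the theorem, and your final paragraph disposes of it by appealing to the very results of \cite{E} and \cite{KK} that the theorem is quoting, with only a vague description of what those papers do. As written (``Granting this\dots''), the argument is circular as a proof of the statement and adds nothing beyond the citation the paper already makes. Moreover the sketched alternative mechanism --- that the projected curve ``specializes into the closure of the principal component'' --- would not by itself rule out an exceptional component: a family whose members degenerate into the principal component's closure can still be a separate irreducible component; what is actually needed (and what Ein and Keem--Kim supply, by a careful analysis of the non-linearly-normal locus, multisecant behaviour of projections, and dimension bounds for $W^s_d$ well beyond Theorem \ref{agh}) is a proof that the locus of such curves has dimension strictly below $\lambda(d,g,3)=4d$ or lies in the closure of the principal component as a \emph{subvariety}, not just as a limit. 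So either present the result as a citation, as the paper does, or be prepared to reproduce the substantial arguments of \cite{E} and \cite{KK} in the remaining range; the easy-range count alone does not prove Theorem \ref{irredrange}.
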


Using Proposition \ref{principal} and Theorem \ref{irredrange}, one can prove the following rather elementary facts, well known to experts and included for self-containedness, when the genus or the degree of the curves under consideration is relatively low.

\begin{prop}
\label{lowgenus}
For $1\le g \le4$, every non-empty $\mathcal{H}_{d,g,3}$ is irreducible of dimension $\lambda(d,g,3)=4d$. Moreover,  $\mathcal{H}_{d,g,3}$ dominates $\mathcal{M}_g$.
\end{prop}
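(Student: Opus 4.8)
The statement for $1 \le g \le 4$ is a finite list of cases, so the natural strategy is to enumerate the admissible $(d,g)$ and, in each case, either invoke Theorem \ref{irredrange} directly or reduce to a classical description of low-genus space curves. First I would record the range of $d$ for which $\mathcal{H}_{d,g,3}$ is non-empty: by the Castelnuovo bound (Lemma \ref{castelnuovo}) and non-degeneracy one needs roughly $d \ge \max\{3, g\}$ at the bottom end (a non-degenerate curve in $\PP^3$ has $d \ge 3$, and a curve of degree $d$ and genus $g$ with $d$ small forces $g$ small), while there is no upper bound. For $d \ge g+2$ with $g \ge 5$ nothing is needed; here $g \le 4$, so Theorem \ref{irredrange} already disposes of all $d \ge g+3$, and for $g \le 4$ one checks the remaining small values $d \in \{g+2, g+1, g, \dots\}$ by hand.

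\smallskip
\noindent\textbf{Key steps.} (1) For each $g \in \{1,2,3,4\}$, list the non-empty cases. For $g=1$: $d \ge 3$, always irreducible (elliptic normal curves and their projections, or for $d=3$ the plane cubic in $\PP^3$ — but note $d=3$ forces a plane curve, which is degenerate, so $d \ge 4$; the twisted cubic has $g=0$). For $g=2$: $d \ge 5$ (Castelnuovo rules out $d \le 4$ for a non-degenerate genus-$2$ curve in $\PP^3$). For $g=3$: $d \ge 6$ — a smooth plane quartic has $d=4, g=3$ but is degenerate in $\PP^3$; the smallest non-degenerate case is $d=6$. For $g=4$: $d \ge 6$ likewise (the canonical genus-$4$ curve is a $(2,3)$ complete intersection in $\PP^3$ of degree $6$). (2) Apply Theorem \ref{irredrange} to cover $d \ge g+3$ and $d = g+2$ (the latter needs $g\ge 5$, hence must be handled separately when $g \le 4$). (3) Treat the finitely many remaining small-degree cases — $d = g+2$ for $g \le 4$ and the borderline cases like $(d,g)=(6,4)$, $(6,3)$, $(5,2)$, $(4,1)$ — by the classical structure theory: such curves lie on quadric or cubic surfaces, and one identifies $\mathcal{G} \subset \mathcal{G}^3_d$ with an explicit irreducible parameter space (e.g. curves of bidegree $(a,b)$ on a smooth quadric, or complete intersections, exactly as in the proof of Proposition \ref{ire}). (4) In each case compute $\dim \mathcal{H}_{d,g,3}$: since these are all locally complete intersections, by Theorem \ref{expecteddim} it suffices to show $H^1(C, N_{C/\PP^3}) = 0$, giving $\dim \mathcal{H}_{d,g,3} = \lambda(d,g,3) = 4d$ (the formula $(r+1)d - (r-3)(g-1)$ with $r=3$). (5) For dominance of $\mathcal{M}_g$: since $g \le 4$, the Brill–Noether number $\rho(d,g,3) = g - 4(g-d+3)$ is non-negative for $d \ge g+3 - \lceil g/4 \rceil$, which holds throughout the non-empty range when $g \le 4$ (indeed for $g \le 3$ one has $\rho \ge 0$ as soon as $d \ge g+2$, and $\mathcal{M}_g$ for $g \le 2$ is unirational of the "expected" type); then Proposition \ref{principal} produces a component dominating $\mathcal{M}_g$, and irreducibility forces $\mathcal{H}_{d,g,3}$ itself to be that principal component. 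For the few cases with $\rho < 0$ one argues directly that the classical parameter space surjects onto $\mathcal{M}_g$ (e.g. every genus-$4$ curve is a $(2,3)$ complete intersection, every genus-$3$ non-hyperelliptic curve a plane quartic embedded by a general projection, etc.).

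\smallskip
\noindent\textbf{Main obstacle.} The genuine work is concentrated in the handful of borderline small-degree cases not covered by Theorem \ref{irredrange} — essentially $d = g+2$ with $g \le 4$ together with the minimal-degree embeddings $(6,4)$, $(6,3)$, $(5,2)$, $(4,1)$. For these one must reproduce a classical argument: show the curve is forced onto a quadric or cubic surface (via $h^0(\mathcal{I}_C(2))$ or $h^0(\mathcal{I}_C(3))$ being positive, exactly the bookkeeping done in Proposition \ref{ire}(ii)), classify the possible linear-equivalence classes on that surface, and verify that each gives a single irreducible family of the expected dimension and that $H^1(N_{C/\PP^3})=0$. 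The $h^1$-vanishing for curves on quadrics or cubics in $\PP^3$ is standard (normal-bundle sequence plus Kodaira–type vanishing on the surface), but it does require a short case check. Everything else is immediate from the cited results.
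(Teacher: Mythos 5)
Your proposal is correct and its skeleton is the same as the paper's: Theorem \ref{irredrange} plus Proposition \ref{principal} dispose of all $d\ge g+3$ (where $\rho\ge 0$, so the unique component is the principal one and dominates $\mathcal{M}_g$), the Castelnuovo bound shows $\mathcal{H}_{d,g,3}=\emptyset$ for $d\le g+2$ when $g\le 3$ and for $d\le 5$ when $g=4$, and the only case needing separate treatment is $(d,g)=(6,4)$. The one place where you genuinely diverge is this last case: the paper simply observes that a smooth $(6,4)$ curve in $\PP^3$ is a canonical curve and vice versa, so $\mathcal{G}$ is birational to $\mathcal{M}_4$, which yields irreducibility, dominance and $\dim=24=4d$ in one stroke; you instead force the curve onto a quadric and a cubic, identify it with a $(2,3)$ complete intersection, get irreducibility from the (irreducible) parameter space of such pencils, the dimension from $h^1(N_{C/\PP^3})=0$, and dominance from $\rho(6,4,3)=0$ via Proposition \ref{principal}. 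Both routes work; the paper's is shorter, yours is more computational but also makes the dimension count explicit (the paper leaves the $4d$ claim implicit in the identification with the principal component resp.\ the bundle structure over $\mathcal{M}_4$). Two small blemishes in your write-up, neither of which creates a gap: the cases $(6,3)$, $(5,2)$, $(4,1)$ you list as ``borderline'' all have $d=g+3$ and are already covered by Theorem \ref{irredrange}, so no surface-theoretic analysis is needed there; and your fallback for ``the few cases with $\rho<0$'' is vacuous (every non-empty case in range has $\rho\ge 0$, including $(6,4)$ where $\rho=0$), which is fortunate since the assertions made there are not accurate as stated (e.g.\ only non-hyperelliptic genus-$4$ curves are $(2,3)$ complete intersections, and a plane quartic is degenerate in $\PP^3$).
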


\begin{proof}
For $d\ge g+3$, we have $\rho(d,g,3)=g-4(g-d+3)\ge 0$ and hence there exists a principal component $\mathcal{H}_0$ which dominates $\mathcal{M}_g$ by Proposition \ref{principal}. Since $\mathcal{H}_{d,g,3}$ is irreducible for $d\ge g+3$ by Theorem \ref{irredrange}, it follows that $\mathcal{H}_{d,g,3}=\mathcal{H}_0$ dominates $\mathcal{M}_g$. Therefore it suffices to prove the statement when $d\le g+2$. 

If $1\le g\le 3$, one has $\pi (d,3)<g$  for $d\le g+2$ and hence $\mathcal{H}_{d,g,3}=\emptyset$.

If $g=4$ we have $d\le6$. Since $\pi(d,3)\le2$ for $d\le5$, one has $\mathcal{H}_{d,4,3}=\emptyset$ for $d\le 5$ and hence we just need to consider $\mathcal{H}_{6,4,3}$. 
\noindent
We note that a smooth curve in $\PP^3$ of degree $6$ and genus $4$ is a canonical curve, that is a curve embedded by the
canonical linear series and vice versa. Hence $\mathcal{G}\subset\mathcal{G}^3_6$ is birationally equivalent to the irreducible variety $\mathcal{M}_4$  and it follows that $\mathcal{H}_{6,4,3}$  is irreducible  which is  a 
$\PP GL(4)$-bundle over an open subset of $\mathcal{M}_4$ or $\mathcal{G}$.  \qedhere
\end{proof}

\begin{prop}
\label{lowdegree}
The Hilbert schemes  $\mathcal{H}_{8,8,3}, \mathcal{H}_{8,9,3}$ and $\mathcal{H}_{9,g,3}$ for $g = 9, 12$ are irreducible, while 
$\mathcal{H}_{9,11,3}$ is empty and $\mathcal{H}_{9,10,3}$ has two irreducible components. 

Moreover, under the natural map $\pi : \mathcal{H}_{d,g,3} \dashrightarrow \mathcal{M}_{g}$, we have
\begin{enumerate}
\item[(i)] $\dim \pi(\mathcal{H}_{8,8,3}) = 17$; 
\item[(ii)] $\dim \pi(\mathcal{H}_{8,9,3}) = 18$;
\item[(iii)] $\dim \pi(\mathcal{Z}) = 21$ both when $\mathcal{Z} = \mathcal{H}_{9,9,3}$ and when $\mathcal{Z}$ is one of the two irreducible components of $\mathcal{H}_{9,10,3}$;
\item[(iv)] $\dim \pi(\mathcal{H}_{9,12,3}) = 23$.
\end{enumerate}
\end{prop}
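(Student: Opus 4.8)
The plan is to handle the seven Hilbert schemes one at a time. In each non-empty case I would first locate the general curve on a surface of small degree in $\PP^3$ by a cohomological count, then do intersection theory on that surface to obtain irreducibility (or the number of components) and $\dim\mathcal Z$, and finally read off $\dim\pi(\mathcal Z)$ from the identity $\dim\pi(\mathcal Z)=\dim\mathcal Z-\dim\PP GL(4)-\dim W^3_d(C)$ for the general curve $C$ parametrised by $\mathcal Z$; this holds because $\mathcal Z$ is, off a proper closed subset, a $\PP GL(4)$-bundle over a component of $\mathcal G\subset\mathcal G^3_d$ on which, in each of our cases, the pertinent $g^3_d$ is complete. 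Everything then comes down to the geometry of the curves (giving $\dim\mathcal Z$ and the component count) and to the value of $\dim W^3_d(C)$ for the general $C$.

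\emph{Emptiness, surfaces, components, dimensions.} That $\mathcal H_{9,11,3}=\emptyset$ follows from the Castelnuovo theory: $\pi(9,3)=12$, a smooth non-degenerate degree-$9$ curve lying on a quadric has genus in $\{0,6,10,12\}$, and one of genus $>\pi_1(9,3)=10$ is forced onto a quadric, so genus $11$ is excluded. For the remaining cases, from $h^0(\mathcal I_C(k))\ge h^0(\mathcal O_{\PP^3}(k))-h^0(\mathcal O_C(k))$ together with Riemann--Roch one checks that the general member of $\mathcal H_{8,8,3}$, $\mathcal H_{8,9,3}$, $\mathcal H_{9,12,3}$ lies on a smooth quadric as a curve of type $(3,5)$, $(4,4)$, $(4,5)$ respectively; that the general member of $\mathcal H_{9,9,3}$ lies on a unique, smooth, cubic surface $S$ as a curve of class $3H+R$ with $R$ a $(-2)$-class orthogonal to the hyperplane class $H$; and that a degree-$9$, genus-$10$ curve either lies on a quadric, where it has type $(3,6)$, or is a complete intersection of two cubics. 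Hence the corresponding family is birationally a bundle over an irreducible base --- a $\PP^{23}$-, $\PP^{24}$-, $\PP^{27}$-, $\PP^{29}$-bundle over the $\PP^9$ of quadrics in the four $(a,b)$-cases, the $\GG(1,19)$ of pencils of cubics for the complete intersections, and a $|3H+R|\cong\PP^{17}$-bundle over the irreducible $19$-dimensional variety of pairs (smooth cubic surface, class $R$), irreducibility of the last base coming from transitivity of the $W(E_6)$-monodromy on the lines of a cubic. Consequently $\mathcal H_{8,8,3},\mathcal H_{8,9,3},\mathcal H_{9,12,3},\mathcal H_{9,9,3}$ are irreducible while $\mathcal H_{9,10,3}$ has exactly two irreducible components (the two families being distinct and each maximal, since every such curve lies in the closure of one of them), of dimensions $\dim\mathcal H_{8,8,3}=32$, $\dim\mathcal H_{8,9,3}=33$, $\dim\mathcal H_{9,12,3}=38$, $\dim\mathcal H_{9,9,3}=36$ and $36$ for each component of $\mathcal H_{9,10,3}$; in particular $\mathcal H_{8,8,3}$, $\mathcal H_{9,9,3}$ and the complete-intersection component of $\mathcal H_{9,10,3}$ have the expected dimension $\lambda(d,g,3)=4d$.

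\emph{The image of $\pi$.} The cases $d=8$ are immediate from Theorem \ref{agh}: on a $(3,5)$-curve $2D$ is non-special and on a $(4,4)$-curve $D=\mathcal O_C(1)$ is a theta-characteristic, so $h^0(\mathcal O_C(2D))=9=3r$ in both, whence $\dim W^3_8(C)=0$ and $\dim\pi(\mathcal H_{8,8,3})=32-15=17$, $\dim\pi(\mathcal H_{8,9,3})=33-15=18$. For the degree-$9$ cases Theorem \ref{agh} only gives $\dim W^3_9(C)\le1$ (and Mumford's refinement of Martens' theorem also leaves the value $1$ open, these curves carrying small pencils), so the point is to show that the general curve in each degree-$9$ family carries no one-parameter family of \emph{very ample} $g^3_9$'s of the relevant geometric kind. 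I would do this by identifying $\pi(\mathcal Z)$ with a well-understood stratum of $\mathcal M_g$ of the predicted dimension: a $(3,6)$-curve is trigonal with hyperplane class $K_C-3T$, $T$ the (unique, by Castelnuovo--Severi) trigonal pencil, so $\pi$ sends this component onto $\mathcal M^1_{10,3}$, of dimension $2\cdot10+1=21$; a complete intersection $C$ of two cubics has $K_C=\mathcal O_C(1)^{\otimes2}$, so $\mathcal O_C(1)$ is an even theta-characteristic with $h^0=4$ and $\pi$ sends this component onto the corresponding theta-null locus, of dimension $3\cdot10-3-\binom{4}{2}=21$; a $(4,5)$-curve has a unique $g^1_4$ (Castelnuovo--Severi) and hyperplane class $T_4+M$ with $M$ a base-point-free $g^1_5$, so $\pi$ sends $\mathcal H_{9,12,3}$ onto the $23$-dimensional locus of $4$-gonal genus-$12$ curves carrying a compatible base-point-free $g^1_5$; and for $\mathcal H_{9,9,3}$ the residual series $|K_C-\mathcal O_C(1)|$ is a birationally very ample $g^2_7$, so $\pi$ sends $\mathcal H_{9,9,3}$ into the locus of normalisations of $6$-nodal plane septics, of dimension $35-6-8=21$. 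The reverse inequality in each case is generic finiteness of $\pi|_{\mathcal Z}$, which, given $\dim\mathcal Z$ and $\dim W^3_9(C)\le1$, amounts exactly to excluding the value $1$.

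\emph{Main obstacle.} The delicate step is this last finiteness --- the generic finiteness of $\pi$ on each degree-$9$ component. It reduces to: the general $(3,6)$-curve has a unique trigonal pencil (clear, Castelnuovo--Severi); the general $(4,5)$-curve has only finitely many base-point-free $g^1_5$'s; the general complete intersection of two cubics has a unique even theta-characteristic with $h^0=4$; the general $6$-nodal plane septic has an essentially unique model as a $6$-nodal septic. The last three are where the real work is, and I expect to settle them by dimension counts comparing $\dim\mathcal Z$ with the dimension of the appropriate incidence variety over $\mathcal M_g$ --- respectively the Hurwitz-type scheme of pairs $(C,g^1_5)$, the moduli of pairs (curve, even theta-characteristic), and the Severi variety of $6$-nodal plane septics --- each mapping to $\mathcal M_g$ with fibres inside a $W$-variety that, by Theorem \ref{agh}, is at most one-dimensional.
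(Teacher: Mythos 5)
Your structural analysis agrees with the paper (the surfaces containing the general curves, the emptiness of $\mathcal{H}_{9,11,3}$, the two components of $\mathcal{H}_{9,10,3}$, and the dimensions $32,33,36,36,36,38$), and your treatment of the two degree-$8$ cases is a correct alternative route: since the hyperplane series is complete and $h^0(\mathcal{O}_C(2))=9=3r$ in both cases, Theorem \ref{agh} forces $\dim W^3_8(C)=0$, the general fibre of $\pi|_{\mathcal Z}$ is the $15$-dimensional $\PP GL(4)$-orbit, and (i), (ii) follow. The paper argues quite differently throughout: it never estimates $W$-varieties, but checks linear normality and surjectivity of $\mu_0:H^0(\mathcal{O}_C(1))\otimes H^0(\omega_C(-1))\to H^0(\omega_C)$ and then applies the number-of-moduli formula \eqref{image}, $\dim\pi(\mathcal Z)=3g-3+\rho+h^1(N_C)$, computing $h^1(N_C)$ from the normal bundle sequence on the quadric (or cubic) surface.

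For the degree-$9$ cases, which are the real content of (iii) and (iv), your proposal has a genuine gap. The upper bound $\dim\pi(\mathcal Z)\le\dim\mathcal Z-15$ is automatic (fibres contain $15$-dimensional $\PP GL(4)$-orbits), so everything rests on the lower bound, i.e.\ on excluding a one-parameter family of very ample complete $g^3_9$'s on the general curve of each family; Theorem \ref{agh} only gives $\dim W^3_9(C)\le 1$, and you explicitly defer this exclusion (``I expect to settle them by dimension counts''), so what you have is a plan, not a proof. Your own remarks essentially close it for two of the four families (the trigonal pencil of a $(3,6)$-curve is unique, and a curve carries only finitely many theta-characteristics, so the hyperplane bundles occurring in the fibre form a finite set), but for $\mathcal{H}_{9,9,3}$ and $\mathcal{H}_{9,12,3}$ the needed finiteness --- of birationally very ample $g^2_7$'s, resp.\ of base-point-free $g^1_5$'s on a $4$-gonal genus-$12$ curve, where $W^1_5(C)\supset W^1_4(C)+C$ is already $1$-dimensional, so base points genuinely have to be handled --- is not established. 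Nor can the proposed ``stratum identifications'' substitute for it: Harris's theorem on the theta-null locus bounds its codimension from above (dimension $\ge 21$), and the Severi-variety count for $6$-nodal septics yields $\le 21$ for the relevant locus only after one knows finiteness of the $g^2_7$'s, so both give inequalities in the direction you already have for free. The paper's computation of $h^1(N_C)$ ($=0$ for $\mathcal{H}_{9,9,3}$ and both components of $\mathcal{H}_{9,10,3}$, $=2$ for $\mathcal{H}_{9,12,3}$) together with \eqref{image} bypasses this issue entirely; if you wish to keep your approach you must actually prove the finiteness statements, e.g.\ by a Castelnuovo--Severi/base-point-free pencil trick analysis of the special linear series involved.
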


\begin{proof}
To see that $\mathcal{H}_{9,11,3} = \emptyset$ we use \cite[Corollary 3.14]{Hr}. In fact note that there is no pair of integers $a \ge b \ge 0$ such that $a+b=9, (a-1)(b-1) = 11$. Since the second Castelnuovo bound $\pi_1(9,3) = 10$ and $\pi(9,3) = 12$, it follows that $\mathcal{H}_{9,11,3} = \emptyset$.

As for the other cases, we start with a few general remarks. We will first prove that the Hilbert schemes $\mathcal{H}_{d,g,3}$ or the components $\mathcal{Z} \subset \mathcal{H}_{d,g,3}$ to be considered are irreducible, generically smooth and that their general point represents a smooth irreducible non-degenerate linearly normal curve $C \subset \PP^3$. Moreover we will show that the standard multiplication map 
\[\mu_0: H^0(\mathcal{O}_C(1)) \otimes H^0(\omega_C(-1)) \to H^0(\omega_C) \] 
is surjective. From the above it will then follow, by well-known facts about the Kodaira-Spencer map (see e.g. \cite[Proof of Proposition 3.3]{S}, \cite[Proof of Theorem 1.2]{L1}, that if $N_C$ is the normal bundle of $C$, then 
\begin{equation}
\label{image}
\dim \pi(\mathcal{Z}) = 3g-3+\rho+h^1(N_C) = 4d-15+h^1(N_C)
\end{equation}
and this will give the results in (i)-(iv).

In general, given a smooth surface $S \subset \PP^3$ containing $C$, we have the commutative diagram
\begin{equation}
\label{dia}
\xymatrix{ H^0(\mathcal{O}_S(1))  \otimes H^0(\omega_S(C)(-1)) \ar[r]^{\hskip 1.3cm \nu} \ar[d] & H^0(\omega_S(C))  \ar[d] \\ 
H^0(\mathcal{O}_C(1)) \otimes H^0(\omega_C(-1))  \ar[r]^{\hskip 1cm \mu_0} & H^0(\omega_C) \ar[d] \\ & H^1(\omega_S) = 0 }
\end{equation}
so that $\mu_0$ is surjective when $\nu$ is.

Now consider a smooth irreducible curve $C$ of type $(a,b)$, with $a \ge b \ge 3$, on a smooth quadric surface $Q \subset \PP^3$. In the exact sequence
\[ 0 \to N_{C/Q} \to N_C \to N_{{Q}_{|C}} \to 0 \]
we have that $H^1(N_{C/Q}) = 0$ since $C^2 = 2g-2+2d > 2g-2$ and $N_{{Q}_{|C}} \cong \mathcal{O}_C(2)$, so that
\begin{equation}
\label{h1}
h^1(N_C) = h^1(\mathcal{O}_C(2)).
\end{equation}

Moreover we claim that
\begin{equation}
\label{mu0}
C \ \mbox{is linearly normal and} \ \mu_0 \ \mbox{is surjective}.
\end{equation}
In fact from the exact sequence
\[ 0 \to \mathcal{O}_Q(1-a,1-b) \to \mathcal{O}_Q(1) \to \mathcal{O}_C(1) \to 0 \]
and the fact that $H^i(\mathcal{O}_Q(1-a,1-b))=0$ for $i=0,1$, we see that $h^0(\mathcal{O}_C(1)) = h^0(\mathcal{O}_Q(1)) = 4$. 
Setting $S = Q$ in \eqref{dia} we find that $\nu$ is the surjective multiplication map of bihomogeneous polynomials 
\[ H^0(\mathcal{O}_Q(1,1)) \otimes H^0(\mathcal{O}_Q(a-3,b-3)) \to H^0(\mathcal{O}_Q(a-2,b-2)) \]
on $\PP^1 \times \PP^1$. This proves \eqref{mu0}.

To see (i) note that, since $\pi_1(8,3) = 7$ and $\pi(8,3) = 9$, it follows by \cite[Corollary 3.14]{Hr} that $\mathcal{H}_{8,8,3}$ is irreducible of dimension $32$ and its general point represents a curve of type $(5,3)$ on a smooth quadric. Moreover $H^1(\mathcal{O}_C(2)) = 0$ since $2d = 16 > 2g-2= 14$ and therefore $H^1(N_C)=0$ by \eqref{h1} and $\mathcal{H}_{8,8,3}$ is smooth at the point representing $C$. Now \eqref{mu0} and \eqref{image} give (i).

To see (ii) observe that, by \cite[Example (10.4)]{CS}, $\mathcal{H}_{8,9,3}$ is smooth irreducible of dimension $33$ and its general point represents a curve of type $(4,4)$ on a smooth quadric. Moreover $h^1(\mathcal{O}_C(2)) = h^1(\omega_C) = 1$ and therefore $h^1(N_C)=1$ by \eqref{h1}. Hence \eqref{mu0} and \eqref{image} give (ii).

Finally, to prove (iii) and (iv), consider $\mathcal{H}_{9,g,3}$ for $g = 9, 10$ or $12$. 

By \cite[Theorem 5.2.1]{D} we know that $\mathcal{H}_{9,9,3}$ is irreducible of dimension $36$ and its general point represents a curve $C$ residual of a twisted cubic $D$ in the complete intersection of a smooth cubic $S$ and a quartic $T$.  In the exact sequence
\[ 0 \to N_{C/S} \to N_C \to N_{{S}_{|C}} \to 0 \]
we have that $H^1(N_{C/S}) = 0$ since $C^2 = 25 > 2g-2= 16$ and $N_{{S}_{|C}} \cong \mathcal{O}_C(3)$ that has degree $27$, so that again $H^1(N_{{S}_{|C}}) = 0$ and therefore also $H^1(N_C) = 0$. Hence $\mathcal{H}_{9,9,3}$ is smooth at the point representing $C$. Moreover,  as $D$ is projectively normal, $C$ is also projectively normal. It remains to prove that $\mu_0$ is surjective, whence, by \eqref{dia}, that $\nu$ is surjective. To this end observe that, if $H$ is the hyperplane divisor of $S$, then $K_S + C - H \sim 2H - D$. A general element $D' \in |2H-D|$ is again a twisted cubic and we get the commutative diagram 
\[ \xymatrix{ 0 \ar[d] & 0 \ar[d] \\ H^0(\mathcal{O}_S(H))  \otimes H^0(\mathcal{O}_S) \ar[r]^{\hskip .8cm \nu_1} \ar[d] & H^0(\mathcal{O}_S(H))  \ar[d] \\ H^0(\mathcal{O}_S(H))  \otimes H^0(\mathcal{O}_S(D')) \ar[r]^{\hskip .7cm \nu} \ar[d] & H^0(\mathcal{O}_S(H+D'))  \ar[d] \\ 
H^0(\mathcal{O}_{D'}(H)) \otimes H^0(\mathcal{O}_{D'}(D')) \ar[r]^{\hskip .7cm \tau}  \ar[d] & H^0(\mathcal{O}_{D'}(H+D')) \ar[d] \\ H^0(\mathcal{O}_S(H))  \otimes H^1(\mathcal{O}_S) = 0 & H^1(\mathcal{O}_S(H)) = 0 } \]
from which we see that $\nu$ is surjective since $\nu_1$ is and so is $\tau$, being the standard multiplication map 
\[ H^0(\mathcal{O}_{\PP^1}(3)) \otimes H^0(\mathcal{O}_{\PP^1}(1)) \to H^0(\mathcal{O}_{\PP^1}(4)). \]
Now \eqref{image} gives (iii) for $g = 9$.

In the case $g=10$ it follows by \cite[Example (10.4)]{CS}, that $\mathcal{H}_{9,10,3}$ has two generically smooth irreducible components $\mathcal{Z}_1$ and $\mathcal{Z}_2$, both of dimension $36$, and their general point represents a curve $C$ of type $(6,3)$ on a smooth quadric for $\mathcal{Z}_1$ and a complete intersection of two cubics for $\mathcal{Z}_2$. In the first case, from the exact sequence
\[ 0 \to \mathcal{O}_Q(-4,-1) \to \mathcal{O}_Q(2) \to \mathcal{O}_C(2) \to 0 \]
and the fact that $H^1(\mathcal{O}_Q(2)) = H^2(\mathcal{O}_Q(-4,-1)) = 0$, we get $h^1(\mathcal{O}_C(2)) = 0$ and therefore $h^1(N_C)=0$ by \eqref{h1}. Then \eqref{mu0} and \eqref{image} give (iii) for $\mathcal{Z}_1$. As for $\mathcal{Z}_2$, we have that $N_C \cong \mathcal{O}_C(3)^{\oplus 2}$ and $\omega_C \cong \mathcal{O}_C(2)$, whence $H^1(N_C)=0$. Moreover, if $S$ is one of the two cubics containing $C$, we have the diagram
\[ \xymatrix{ H^0(\mathcal{O}_{\PP^3}(1))  \otimes H^0(\mathcal{O}_{\PP^3}(1)) \ar[r]^{\hskip 1cm \nu_1} \ar[d] & H^0(\mathcal{O}_{\PP^3}(2))  \ar[d] ^{\alpha} \\ 
H^0(\mathcal{O}_S(1)) \otimes H^0(\mathcal{O}_S(1))  \ar[r]^{\hskip 1cm \nu} & H^0(\mathcal{O}_S(2)). } \]
As is well known both $\alpha$ and $\nu_1$ are surjective, whence so is $\nu$ and then $\mu_0$ by \eqref{dia}. Therefore \eqref{image} gives (iii) for $\mathcal{Z}_2$. 

Finally, since $\pi(9,3) = 12$, it follows by \cite[Corollary 3.14]{Hr} that $\mathcal{H}_{9,12,3}$ is irreducible of dimension $38$ and its general point represents a curve of type $(5,4)$ on a smooth quadric. Moreover, from the exact sequence
\[ 0 \to \mathcal{O}_Q(-3,-2) \to \mathcal{O}_Q(2) \to \mathcal{O}_C(2) \to 0 \]
and the fact that $H^i(\mathcal{O}_Q(2))=0$ for $i=1,2$, we get $h^1(\mathcal{O}_C(2)) = h^2(\mathcal{O}_Q(-3,-2)) =$ $h^0(\mathcal{O}_Q(1,0)) = 2$ and therefore $h^1(N_C)=2$ by \eqref{h1}. Hence $h^0(N_C)=38$ and $\mathcal{H}_{9,12,3}$ is smooth at the point representing $C$. Now \eqref{mu0} and \eqref{image} give (iv). \qedhere
\end{proof}

We can now prove our result for $r=3$.

\begin{thm}
\label{r=3}
Let $\mathcal{Z}$ be an irreducible component of $\mathcal{H}_{d,g,3}$ and $g \ge 5$. Then, under the natural map $\pi : \mathcal{H}_{d,g,3} \dashrightarrow \mathcal{M}_{g}$, the following possibilities occur:
\begin{enumerate}
\item[(i)] $\mathcal{Z}$ dominates $\mathcal{M}_g$;
\item[(ii)] $\mathcal{Z} = \mathcal{H}_{7,6,3}$ and $\dim \pi(\mathcal{Z}) = 13$;
\item[(iii)] $\mathcal{Z} = \mathcal{H}_{8,7,3}$ or $\mathcal{H}_{8,8,3}$ and $\dim \pi(\mathcal{Z}) = 17$; 
\item[(iv)] $\mathcal{Z} = \mathcal{H}_{8,9,3}$ and $\dim \pi(\mathcal{Z}) = 18$;
\item[(v)] $\mathcal{Z} = \mathcal{H}_{9,9,3}$ or $\mathcal{Z} \subset \mathcal{H}_{9,10,3}$ and $\dim \pi(\mathcal{Z}) = 21$;
\item[(vi)] $\dim \pi(\mathcal{Z}) \ge 23$.
\end{enumerate}
\end{thm}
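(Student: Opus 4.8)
The plan is to partition the analysis according to the size of $d$ relative to $g$, reducing most ranges to the results already established and reserving a single uniform dimension estimate for the range $10 \le d \le g$. Throughout, $\mathcal{Z}$ is an irreducible (necessarily non-empty) component of $\mathcal{H}_{d,g,3}$ with $g \ge 5$, and I use freely that $\mathcal{Z}$ is a $\PP GL(4)$-bundle over an open subset of a component $\mathcal{G}$ of $\mathcal{G}^3_d$, so that $\dim \mathcal{Z} = \dim \mathcal{G} + 15$.

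First I would dispose of $d \ge g+1$. If $d \ge g+2$ then $\rho(d,g,3) = 4d-3g-12 \ge g-4 \ge 1$, so by Proposition \ref{principal} some component of $\mathcal{H}_{d,g,3}$ dominates $\mathcal{M}_g$; as $\mathcal{H}_{d,g,3}$ is irreducible by Theorem \ref{irredrange} (using $g \ge 5$ when $d = g+2$), $\mathcal{Z}$ is that component and we are in (i). If $d = g+1$, Proposition \ref{ire} gives $\mathcal{H}_{6,5,3} = \emptyset$; $\mathcal{Z} = \mathcal{H}_{7,6,3}$ with $\dim\pi(\mathcal{Z}) = 13$, i.e. (ii); $\mathcal{Z} = \mathcal{H}_{8,7,3}$ with $\dim\pi(\mathcal{Z}) = 17$, i.e. (iii); and, for $g \ge 8$, $\mathcal{H}_{g+1,g,3}$ is irreducible with $\dim\pi(\mathcal{H}_{g+1,g,3}) = 3g-3$, hence dominates $\mathcal{M}_g$, i.e. (i). For $d \le g$, the chain $d \le g \le \pi(d,3)$ forces $d \ge 8$ (as $\pi(7,3) = 6 < 7$), and the cases $d \in \{8,9\}$ are precisely $\mathcal{H}_{8,8,3}$, $\mathcal{H}_{8,9,3}$, $\mathcal{H}_{9,9,3}$, $\mathcal{H}_{9,10,3}$, $\mathcal{H}_{9,11,3}(=\emptyset)$ and $\mathcal{H}_{9,12,3}$, which are exactly the Hilbert schemes handled in Proposition \ref{lowdegree}; these yield (iii), (iv), (v) and, for $\mathcal{H}_{9,12,3}$, (vi).

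It remains to treat $10 \le d \le g$, the technical core. Let $C$ be a general member of $\mathcal{Z}$, embedded in $\PP^3$ by a very ample $g^3_d$ sitting inside the complete series $|D|$, with $r' = h^0(\mathcal{O}_C(1)) - 1 \ge 3$; since the very ample $g^3_d$ is a linear projection of $\phi_{|D|}$, the map $\phi_{|D|}$ is itself an embedding, so $C$ sits as a smooth non-degenerate curve of degree $d \le g$ in $\PP^{r'}$ and $r' \le \tfrac{d+1}{3}$ by Lemma \ref{castelnuovo}. The generic fibre of $\mathcal{G} \to \mathcal{M}_g$ over $[C]$ is an open subset of a $\GG(3, r')$-bundle over the component of $W^{r'}_d(C)$ through $[D]$, and the $\PP GL(4)$-orbits on the relevant locus are $15$-dimensional since $\mathrm{Aut}(C)$ is finite; hence, using $\dim \mathcal{Z} \ge \lambda(d,g,3) = 4d$ from Theorem \ref{expecteddim},
\[ \dim \pi(\mathcal{Z}) \ge 4d - 15 - 4(r'-3) - \dim_{[D]} W^{r'}_d(C). \]
Since $|D|$ is birationally very ample and special (as $d \le g$), Theorem \ref{agh} gives $\dim_{[D]} W^{r'}_d(C) \le \max\{0, d-3r'+1\}$; as $r' \mapsto 4(r'-3) + \max\{0, d-3r'+1\}$ is non-decreasing on $[3, \tfrac{d+1}{3}]$ and hence at most $\tfrac{4d-32}{3}$, we get $\dim\pi(\mathcal{Z}) \ge 4d - 15 - \tfrac{4d-32}{3} = \tfrac{8d-13}{3} > 22$ for $d \ge 10$, so $\dim\pi(\mathcal{Z}) \ge 23$ and we are in (vi).

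The crux, and the main technical point, is the dimension count of this last step: identifying the generic fibre of $\mathcal{G} \to \mathcal{M}_g$ with a Grassmann bundle over the relevant component of $W^{r'}_d(C)$, checking that on the relevant locus the curves have finite automorphism group so that the $\PP GL(4)$-fibres are genuinely $15$-dimensional, and applying Theorem \ref{agh} to the possibly incomplete very ample $g^3_d$ through its complete model $|D|$. The rest amounts to bookkeeping over the finitely many remaining numerical cases.
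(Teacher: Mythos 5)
Your proposal is correct and follows essentially the same route as the paper: dispose of $d\ge g+1$ and the sporadic cases $d=8,9$ via Theorem \ref{irredrange} and Propositions \ref{ire} and \ref{lowdegree}, then for $d\le g$ bound $\dim\pi(\mathcal{Z})$ from below using $\dim\mathcal{Z}\ge\lambda(d,g,3)=4d$, the $\GG(3,\alpha)$-bundle structure over a sublocus of $W^{\alpha}_d$, Lemma \ref{castelnuovo} and Theorem \ref{agh}. The only difference is cosmetic: you merge the paper's two subcases ($\dim W^{\alpha}_d(C)=0$ or $\ge 1$) into the single function $4(\alpha-3)+\max\{0,d-3\alpha+1\}$ and argue directly for $d\ge10$ instead of by contradiction from $\dim\pi(\mathcal{Z})\le 22$, which is the same computation.
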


\begin{proof}
We first make the following general remark, which will also be used in the proof of Theorem \ref{r>3}. 

Let $r \ge 3$, let $\mathcal{Z}$ be an irreducible component of $\mathcal{H}_{d,g,r}$ not dominating $\mathcal{M}_g$ and let $C$ be a smooth irreducible non-degenerate curve of degree $d$ and genus $g$ in $\PP^r$ corresponding to a general point $c \in \mathcal{Z}$. We claim that $\mathcal{O}_C(1)$ is special. 

In fact $\mathcal{Z}$ is a $\PP GL(r+1)$-bundle over an open subset of a component $\mathcal{G}_1$ of $\mathcal{G}$ (see \S 1). If $\mathcal{O}_C(1)$ is non-special then, by Riemann-Roch, $d \ge g + r$ and $\mathcal{G}_1$ must coincide with $\mathcal{G}_0$ of Proposition \ref{principal}. But $\mathcal{G}_0$ dominates $\mathcal{M}_{g}$, so that also $\mathcal{Z}$ does, a contradiction.

Therefore $\mathcal{O}_C(1)$ is special. Set $\alpha = \dim |\mathcal{O}_C(1)|$, so that $\alpha \ge r$. 

We now specialize to the case $r=3$.

First we notice that, in cases (ii)-(v), using Propositions \ref{ire} and \ref{lowdegree}, $\mathcal{H}_{d,g,3}$ is irreducible, except for $\mathcal{H}_{9,10,3}$, and the dimension of the image under $\pi$ of each component is as listed. Also we have $d \geq 7$, for if $d \leq 6$ then $g \le \pi (6,3) = 4$. 

Assume that $\mathcal{Z}$ is not as in (i), (ii) or the first case of (iii) and that $\dim \pi(\mathcal{Z}) \leq 22$. By Theorem \ref{irredrange}, Proposition \ref{ire} and Remark \ref{dom}, we can assume that $d \le g$. 

For any component $\mathcal{G}_1 \subseteq \mathcal{G}\subseteq \mathcal{G}^3_d$, there exists a component $\mathcal{W}$ of $\mathcal{W}^{\alpha}_d$ and a closed subset $\mathcal{W}_1\subseteq \mathcal{W}\subseteq \mathcal{W}^{\alpha}_d$ such that $\mathcal{G}_1$ is a Grassmannian $\mathbb{G}(3,\alpha)$-bundle over a non-empty open subset of $\mathcal{W}_1$. Thus we have
\begin{eqnarray}
\label{2.1}
\lambda(d,g,3)&=&4d\nonumber\\
&\le& \dim \mathcal{Z} \nonumber\\
&\le& \dim\pi(\mathcal{Z})+\dim W^{\alpha}_{d}(C)+\dim \mathbb{G}(3,\alpha)+\dim\mathbb{P}GL(4)\nonumber\\
&\le& \dim W^{\alpha}_{d}(C) + 4 \alpha + 25.
\end{eqnarray}
By Lemma \ref{castelnuovo} we have $\alpha \le \frac{d+1}{3}$.

If $\dim W^{\alpha}_{d}(C) = 0$, \eqref{2.1} gives
\[ 4d \leq \frac{4}{3}(d+1) + 25 \] 
therefore $d \leq 9$. 

If $\dim W^{\alpha}_{d}(C) \geq 1$ then Theorem \ref{agh} implies $\alpha \leq \frac{d}{3}$. By \eqref{2.1} and Theorem \ref{agh} again, we find 
\[ 4d \leq d + \alpha + 26 \leq \frac{4d+78}{3} \] 
that is again $d \le 9$.

If $d = 7$ we find the contradiction $7 \le g \le \pi (7,3) = 6$. If $d = 8$ it follows that $8 \le g \le \pi (8,3) = 9$ and we get that $\mathcal{Z}$ is as in the second case of (iii) or as in case (iv). If $d = 9$ we find that $9 \le g \le \pi (9,3) = 12$. Since, by Proposition \ref{lowdegree}, $\mathcal{H}_{9,11,3}$ is empty and $\dim \pi(\mathcal{H}_{9,12,3}) = 23$, we get case (v).
\end{proof}

\begin{rmk}
\noindent 
\begin{enumerate}[leftmargin=0cm,itemindent=.5cm,labelwidth=\itemindent,labelsep=0cm,align=left]
\item[(i)]  There are no reasons to believe that our estimate on the lower bound of  $\dim \pi(\mathcal{Z})$ is sharp. On the other hand, it would be interesting to have a better estimate (hopefully sharp)  on the lower bound of $\dim \pi(\mathcal{Z})$ and come up with  (irreducible or reducible) examples of Hilbert scheme $\mathcal{H}_{d,g,3}$ with a component $\mathcal{Z}$ 
achieving the bound.
\item[(ii)] If $d \le g^{\frac{2}{3}}$ there is a better lower bound for the dimension of components $\mathcal{Z}$ of $\mathcal{H}_{d,g,3}$ in \cite[Theorem 1.3]{C}. This leads, in this case, to a better lower bound of $\dim \pi(\mathcal{Z})$.
\end{enumerate}
\end{rmk}

Theorem \ref{r=3}  and  Proposition \ref{lowgenus} yield the following immediate corollary.
\begin{cor}
\label{main}
\null \hskip 3cm
\begin{enumerate} 
\item[\rm(i)] $\mathcal{H}_{d,g,3}$ has no component that is rigid in moduli if $g>0$.
\item[\rm(ii)] Let $C \subset \Bbb{P}^r$ be a smooth irreducible and non-degenerate curve of genus $g$ whose only deformations are given by projective transformations. If $g = 0$ or $r \le 3$ then $C$ is a rational normal curve.
\end{enumerate}
\begin{proof} Since (i) is immediate from Theorem \ref{r=3} and Proposition \ref{lowgenus} and since (ii) is trivial for $r=2$, we only need to check (ii) for $g=0$ and $r \ge 3$. Now $C$ belongs to a unique component $\mathcal{H}$ of the Hilbert scheme $\mathcal{H}_{d,0,r}$ with $\dim \mathcal{H} \le (r+1)^2-1$. On the other hand $\dim\mathcal{H} \ge \lambda(d,0,r) = (r+1)d+r-3$, hence $(r+1)^2-1 \ge (r+1)d+r-3$ giving that $d < r+1$. Therefore $d=r$ and $C$ is the rational normal curve.
\end{proof}
\end{cor}

\section{Non-existence of components of $\mathcal{H}_{d,g,r}$ rigid in moduli with $r \ge 4$}
In this section, we  prove the non-existence of  a component of $\mathcal{H}_{d,g,r}$ rigid in moduli in a certain restricted range of $d$, $g>0$ and $r \ge 4$.

\begin{thm}
\label{r>3}
$\mathcal{H}_{d,g,r}$ has no components rigid in moduli if $g > 0$ and
\begin{enumerate}
\item[(i)] $d > \min\{\frac{17g + 72}{64}, \frac{4g + 15}{15}, \max\{\frac{g + 18}{4}, \frac{17g + 44}{64} \}\}$  \ \ \ if $r = 4$;
\item[(ii)] $d > \min\{\frac{9g + 20}{20}, \frac{10g + 17}{22}, \max\{\frac{2g + 25}{5}, \frac{9g + 10}{20} \}\}$ and $d > \frac{g+22}{3}$ for $101 \le g \le 113$, \ \ \ if $r = 5$;
\item[(iii)] $d > \min\{\frac{13g + 20}{22}, \frac{3g + 3}{5}, \max\{\frac{g + 10}{2}, \frac{13g + 10}{22}\}, \max\{\frac{g + 10}{2}, \frac{3g - 1}{5} \}\}$  \ \ \ if $r = 6$;
\item[(iv)] $d > \min\{\frac{19g + 24}{27}, \max\{\frac{4g + 39}{7}, \frac{76g +71}{108} \}\}$  \ \ \ if $r = 7$;
\item[(v)] $d > \min\{\frac{4g + 1}{5}, \frac{5g - 4}{6}\}$  \ \ \ if $r = 8$;
\item[(vi)] $d > \min\{\frac{9g - 5}{10}, \frac{29g + 3}{33}\}$  and $(d,g) \neq (30,34)$ \ \ \ if $r = 9$;
\item[(vii)] $d > \min\{\frac{21g - 4}{22}, \frac{17g + 12}{18}\}$   \ \ \ if $r = 10$;
\item[(viii)] $d > g$  \ \ \ if $r = 11$;
\item[(ix)] $d > \frac{2(r-5)g - r + 14}{r+1}$ \ \ \ if $r \ge 12$.
\end{enumerate}
\end{thm}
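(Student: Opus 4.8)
The strategy is the same bootstrap used for $r=3$ in Theorem~\ref{r=3}: suppose $\mathcal{Z}\subseteq\mathcal{H}_{d,g,r}$ is a component rigid in moduli, so that $\dim\pi(\mathcal{Z})=0$, and derive a numerical contradiction. By the general remark at the start of the proof of Theorem~\ref{r=3} (which was stated for all $r\ge3$), the hyperplane bundle $\mathcal{O}_C(1)$ of a general $C\in\mathcal{Z}$ is special; set $\alpha=\dim|\mathcal{O}_C(1)|\ge r$. Exactly as in \eqref{2.1}, writing $\mathcal{Z}$ as a $\PP GL(r+1)$-bundle over a $\mathbb{G}(r,\alpha)$-bundle over a subvariety $\mathcal{W}_1$ of a component of $\mathcal{W}^\alpha_d$, and using $\dim\pi(\mathcal{Z})=0$, one gets
\begin{equation*}
\lambda(d,g,r)=(r+1)d-(r-3)(g-1)\le\dim\mathcal{Z}\le\dim W^\alpha_d(C)+\dim\mathbb{G}(r,\alpha)+\dim\PP GL(r+1).
\end{equation*}
Here $\dim\mathbb{G}(r,\alpha)=(r+1)(\alpha-r)$ and $\dim\PP GL(r+1)=(r+1)^2-1$. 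Then bound $\alpha$ by Lemma~\ref{castelnuovo} ($\alpha\le\frac{d+1}{3}$ if $d\le g$, $\alpha\le\frac{2d-g+1}{3}$ if $d\ge g$), and bound $\dim W^\alpha_d(C)$ either trivially by $0$, or — when it is positive — by the Accola--Griffiths--Harris estimate of Theorem~\ref{agh}, namely $d-3\alpha+1$ if $d\le g$ and $2d-3\alpha-g+1$ if $d\ge g$ (the latter only after checking the $g^\alpha_d$ is birationally very ample, which holds since $C\subset\PP^r$ is non-degenerate and $\alpha\ge r\ge4$ forces birationality of the map to $\PP^\alpha$, or one passes to the model in $\PP^r$). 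Each choice of regime (sign of $d-g$, $W$ zero-dimensional or positive-dimensional) yields a linear inequality in $d$ and $g$; the various fractions appearing in (i)--(ix) are precisely the thresholds these inequalities produce, and the ``$\min$'' over several expressions reflects that one only needs \emph{one} of the regimes to give a contradiction. This reduces the theorem to finitely many residual pairs $(d,g)$ lying below the thresholds, which are handled by hand using Lemma~\ref{castelnuovo2}: there $C$ degenerates to a singular stable curve inside $\PP^r$, so $\mathcal{Z}$ cannot be rigid in moduli. The excluded pairs $(d,g)=(30,34)$ for $r=9$, the interval $101\le d\le113$ for $r=5$, and the hypothesis restrictions in Lemma~\ref{castelnuovo2}(i),(ii) ($d\ge2r+1$, $g>\pi_1$; or linear normality, $r\ge8$, $d\ge2r+3$) are exactly the finitely many border cases where neither the AGH inequality nor Lemma~\ref{castelnuovo2} applies cleanly, and these must be either ruled out by ad hoc arguments or carried in the statement as exceptions.

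**Carrying it out.** First I would fix $r$ and split into the cases $d\le g$ and $d\ge g$. In each, I substitute the $\alpha$-bound and the $W$-bound into the displayed inequality and solve for $d$ in terms of $g$; this is pure bookkeeping and produces, for each $(r,\text{regime})$, an upper bound $d\le f_{r,\text{regime}}(g)$, i.e. a contradiction whenever $d>f_{r,\text{regime}}(g)$. For $r=4$, say, the four regimes give the four fractions $\frac{17g+72}{64}$, $\frac{4g+15}{15}$, $\frac{g+18}{4}$, $\frac{17g+44}{64}$, and since the AGH bound requires the $g^\alpha_d$ to be birationally very ample one sometimes pairs two regimes together via a $\max$ (that is the role of $\max\{\frac{g+18}{4},\frac{17g+44}{64}\}$: one regime handles small $\alpha$, the other large $\alpha$, and you need whichever is worse). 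Next, for the finitely many $(d,g)$ not excluded by the threshold, I invoke Lemma~\ref{castelnuovo2}: checking $g>\pi_1(d,r)$ (resp.\ the hypotheses of (ii)) in that leftover range shows $C$ admits a degeneration to a singular stable curve, contradicting rigidity in moduli since such a curve lies in the boundary of $\overline{\mathcal{M}}_g$ and the family $\{C_t\}$ is a non-constant family in moduli after passing to a suitable finite cover. The very bottom cases where $d$ is too small for $C$ to lie on a surface of degree $r-1$ or $r$ are ruled out separately by Castelnuovo-type genus bounds ($g\le\pi(d,r)$ combined with $g>\pi_1(d,r)$) or are genuinely the listed exceptions.

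**The main obstacle.** The conceptual content is entirely in the two cited theorems (AGH, and the del~Pezzo--Nagata classification feeding Lemma~\ref{castelnuovo2}); the real work — and the place where errors or missed cases hide — is the interface between them, namely the finite list of pairs $(d,g)$ that fall below the AGH-derived threshold but do not satisfy the hypotheses of Lemma~\ref{castelnuovo2}. Making that list provably finite, and disposing of each entry (or recording it as an exception, as with $(30,34)$ and the band $101\le d\le113$), is the delicate step: one must track carefully when $d\ge2r+1$ versus $2r+3$, when $g>\pi_1$ versus $g=\pi_1$ versus $g>\pi_2$, and when linear normality can be assumed for a general point of $\mathcal{Z}$. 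A secondary subtlety is justifying that the $g^\alpha_d$ on the general $C$ is birationally very ample so that Theorem~\ref{agh} applies — for $\alpha=r$ this is immediate from non-degeneracy, but for $\alpha>r$ (i.e.\ $\mathcal{O}_C(1)$ not complete) one should argue via the complete model, and this is where pairing regimes with a $\max$ becomes necessary rather than optional. Everything else is linear algebra in $d$ and $g$.
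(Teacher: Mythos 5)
There is a genuine gap, and it sits exactly where you locate the ``bookkeeping''. With only Lemma~\ref{castelnuovo} ($\alpha\le\frac{d+1}{3}$ or $\frac{2d-g+1}{3}$) and Theorem~\ref{agh} fed into the dimension count, the inequalities you get are strictly weaker than the thresholds in (i)--(ix): for $r=4$, case $d<g$, $\dim W^\alpha_d(C)=0$, the computation gives only $5(d-\alpha)-3\le g$ with $\alpha\le\frac{d+1}{3}$, i.e.\ $d\le\frac{3g+14}{10}$, which is larger than $\frac{17g+72}{64}$; so your ``pure bookkeeping'' does not reach the statement. The missing idea, which is the heart of the paper's proof, is to use rigidity \emph{first} to exclude degenerations: a rigid component cannot contain a curve admitting a degeneration to a singular stable curve, so Lemma~\ref{castelnuovo2} -- applied not in $\PP^r$ but to the linearly normal model of $C$ in $\PP^{\alpha}$ (this is precisely how the linear-normality hypothesis of part (ii) is arranged, and why the $\alpha\le 7$ versus $\alpha\ge 8$ dichotomy, not birational very ampleness, is what produces the $\max$ terms) -- forces $g\le\pi_1(d,\alpha)$, and $g<\pi_1(d,\alpha)$, $g\le\pi_2(d,\alpha)$ when $\alpha\ge 8$. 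Combined with $d<g$ these Castelnuovo-type genus bounds force $m_1,m_2$ to be large, hence $\alpha$ to be far smaller than $d/3$ (e.g.\ $\alpha\le d/10$ for $r=4$, $\alpha\lesssim 2d/17$ in the other regime), and only after plugging these sharpened bounds back into the dimension count do the fractions of (i)--(ix) appear. In other words, Lemma~\ref{castelnuovo2} is not a mop-up tool for leftover cases; it is the engine that sharpens the bound on $\alpha$.

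Your fallback step also fails as described: the region between your crude threshold and the stated one is an infinite strip of pairs $(d,g)$, not a finite list, so it cannot be ``handled by hand''; and in that region you cannot invoke Lemma~\ref{castelnuovo2} in $\PP^r$ anyway, since $g>\pi_1(d,r)$ need not hold and a general curve of a non-principal component need not be linearly normal in $\PP^r$ (whereas it is, tautologically, in $\PP^\alpha$). The exceptions $(d,g)=(30,34)$ for $r=9$ and the band $101\le d\le 113$ for $r=5$ arise inside the paper's $m_2$-analysis of the $\pi_1,\pi_2$ inequalities, not as cases where AGH or the degeneration lemma ``fail to apply cleanly''. Finally, note that the paper also has to check that the cases $d\ge g$ (your third and fourth regimes) simply do not occur for $4\le r\le 10$, and to run a separate, simpler argument for $r\ge 11$ using $\pi_2(d,\alpha)$ directly; neither reduction is visible in your outline.
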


\begin{proof}

Suppose that there is a component $\mathcal{Z}$ of $\mathcal{H}_{d,g,r}$ rigid in moduli and let $C$ be a smooth irreducible non-degenerate curve of degree $d$ and genus $g$ in $\PP^r$ corresponding to a general point $c \in \mathcal{Z}$.

Let $\alpha = \dim |\mathcal{O}_C(1)|$, so that $\alpha \ge r$ and note that, as in the proof of Theorem \ref{r=3}, using Proposition \ref{principal}, we have that $\mathcal{O}_C(1)$ is special. In particular $d \le 2g-2$ and $g \ge 2$.

Moreover we claim that $C$ does not admit a degeneration $\{C_t \subset \PP^{\alpha} \}_{t \in \PP^1}$ to a singular stable curve. In fact such a degeneration gives a rational map $\PP^1 \dashrightarrow \overline{\mathcal{M}_g}$ whose image contains two distinct points, namely the points representing $C$ and the singular stable curve. Hence the image must be a curve and therefore the curves in the pencil $\{C_t \subset \PP^{\alpha} \}_{t \in \PP^1}$ cannot be all isomorphic. Now we have a projection $p: \PP^{\alpha} \dashrightarrow \PP^r$ that sends $C \subset \PP^{\alpha}$ isomorphically to $p(C) = C \subset \PP^r$. Thus the pencil gets projected and gives rise to a deformation $p(C_t) \subset \PP^r$ of $C$ in $\mathcal{Z}$ (recall that $C$ represents a general point of $\mathcal{Z}$). For general $t$ we have that $p(C_t)$ is therefore smooth, whence $p(C_t) \cong C_t$. Since $\mathcal{Z}$ is rigid in moduli we get the contradiction $C \cong C_t$ for general $t$. This proves the claim and now, by Lemma \ref{castelnuovo2}, we can and will assume that $g \le \pi_1(d,\alpha)$ when $d \ge 2 \alpha+1$ and that $g \le \pi_2(d,\alpha), g < \pi_1(d,\alpha)$ when $\alpha \ge 8$ and $d \ge 2 \alpha+3$.

Recall again that for any component $\mathcal{G}_1 \subseteq \mathcal{G}\subseteq \mathcal{G}^r_d$, there exists a component $\mathcal{W}$ of $\mathcal{W}^{\alpha}_d$ and a closed subset $\mathcal{W}_1\subseteq \mathcal{W} \subseteq \mathcal{W}^{\alpha}_d$ such that $\mathcal{G}_1$ is a Grassmannian $\mathbb{G}(r,\alpha)$-bundle over a non-empty open subset of $\mathcal{W}_1$. By noting that $\mathcal{W}_1$ is a sub-locus inside $W^{\alpha}_d(C)$ in our current situation,  we come up with an inequality similar to (\ref{2.1}):
\begin{eqnarray}
\label{bound}
\lambda(d,g,r)&=&(r+1)d-(r-3)(g-1)\nonumber\\
&\le& \dim \mathcal{Z}\nonumber\\
&\le& \dim \mathcal{W}_1+\dim \GG(r,\alpha)+\dim\PP GL(r+1)\nonumber\\
&=&\dim \mathcal{W}_1+(r+1)(\alpha-r)+r^2+2r\\ 
&\le& \dim W^{\alpha}_{d}(C)+(r+1)\alpha+r.\nonumber
\end{eqnarray}
This leads to the following four cases.

\noindent CASE 1: $d< g$ and $\dim W^{\alpha}_{d}(C) = 0$.

We have $\alpha \le (d+1)/3$ by Lemma \ref{castelnuovo} and \eqref{bound} gives
\begin{equation}
\label{uno}
d < g, \alpha \le (d+1)/3 \ \mbox{and} \ (r+1)(d-\alpha) - 3 \le (r-3) g.
\end{equation}
\noindent CASE 2: $d< g$ and $\dim W^{\alpha}_{d}(C) \geq 1$. 

By Theorem \ref{agh} we get $\alpha \le d/3$. By (\ref{bound}) and Theorem \ref{agh} again, we find 
\begin{equation}
\label{due}
d < g, \alpha \le d/3 \ \mbox{and} \ rd-(r-2) \alpha - 4 \le (r-3) g.
\end{equation}
\noindent CASE 3: $d \ge g$ and $\dim W^{\alpha}_{d}(C) = 0$. 

We have $\alpha \le (2d-g+1)/3$ by Lemma \ref{castelnuovo} whence, in particular, $d \ge (g+3r-1)/2$. Now \eqref{bound} gives
\begin{equation}
\label{tre}
d \ge g, \alpha \le (2d-g+1)/3 \ \mbox{and} \ (r+1)(d-\alpha) - 3 \le (r-3) g.
\end{equation}
\noindent CASE 4: $d \ge g$ and $\dim W^{\alpha}_{d}(C) \geq 1$.

By Theorem \ref{agh} we have $\alpha \leq (2d-g)/3$ whence, in particular, $d \ge (g+3r)/2$. By (\ref{bound}) and Theorem \ref{agh} again, we find 
\begin{equation}
\label{quattro}
d \ge g, \alpha \le (2d-g)/3 \ \mbox{and} \ (r-1)d-(r-2) \alpha - 4 \le (r-4) g.
\end{equation}
The plan is to show that, given the hypotheses, the inequalities $\eqref{uno}-\eqref{quattro}$ contradict $g \le \pi_1(d,\alpha)$ when $d \ge 2 \alpha+1$ or $g \le \pi_2(d,\alpha), g < \pi_1(d,\alpha)$ when $\alpha \ge 8$ and $d \ge 2 \alpha+3$.

To this end let us observe that $d \ge 2 \alpha + 3$ in cases $\eqref{uno}-\eqref{quattro}$: In fact this is obvious in cases $\eqref{uno}$ and $\eqref{due}$, while in cases $\eqref{tre}$ and $\eqref{quattro}$, using $g \le 2d-3\alpha+1$ and $g \le 2d-3\alpha$ respectively, if $d \le 2 \alpha + 2$, we get $4 \alpha \le 3r-14$ and $4 \alpha \le 2r-10$, both contradicting $\alpha \ge r$. Therefore in the sequel we will always have that $g \le \pi_1(d,\alpha)$ and that either $\alpha \le 7$ or $\alpha \ge 8$ and $g \le \pi_2(d,\alpha), g < \pi_1(d,\alpha)$.

We now recall the notation. Set 
\[ m_1 = \lfloor \frac{d-1}{\alpha} \rfloor, m_2 = \lfloor \frac{d-1}{\alpha+1} \rfloor, \varepsilon_1 = d - m_1 \alpha -1, \varepsilon_2 = d - m_2 (\alpha+1) -1 \] and 
\[ \mu_1 = \begin{cases} 1 & \mbox{if} \ \varepsilon_1 = \alpha - 1 \cr 0 & \mbox{if} \ 0 \le \varepsilon_1 \le \alpha - 2 \cr \end{cases}, \mu_2 = \begin{cases} 2 & \mbox{if} \ \varepsilon_2 = \alpha \cr 1 & \mbox{if} \ \alpha - 2 \le \varepsilon_2 \le \alpha - 1 \cr 0 & \mbox{if} \ 0 \le \varepsilon_2 \le \alpha - 3 \cr\end{cases} \]
so that
\[ \pi_1(d, \alpha) = \binom{m_1}{2} \alpha + m_1 (\varepsilon_1 +1) + \mu_1, \pi_2(d, \alpha) = \binom{m_2}{2} (\alpha +1) + m_2 (\varepsilon_2 +2) + \mu_2. \]
We now deal with the case $r \ge 11$ (and hence $\alpha \ge 11$).

We start with $\eqref{uno}$. If $\alpha \ge d/3$ then either $\alpha = (d+1)/3$ or $\alpha = d/3$. Then $m_2 = 2, \mu_2 = 0$ and $\pi_2(d, \alpha) \le d < g$. Therefore $\alpha \le (d-1)/3$ and $\eqref{uno}$ gives $d \le \frac{3(r-3)g - r+8}{2(r+1)}$, contradicting (viii)-(ix).

Similarly, in $\eqref{due}$, if $\alpha = d/3$ then $m_2 = 2, \mu_2 = 0$ and $\pi_2(d, \alpha) = d - 1 < g$. Therefore $\alpha \le (d-1)/3$ and $\eqref{due}$ gives $d \le \frac{3(r-3)g - r + 14}{2(r+1)}$, contradicting (viii)-(ix).

Now in $\eqref{tre}$, if $\alpha \ge (2d-g)/3$ then either $\alpha = (2d-g)/3$ or $\alpha = (2d-g+1)/3$. We find $m_2 = 2, \mu_2 = 0$ and $\pi_2(d, \alpha) \le g -1$. Therefore $\alpha \le (2d-g-1)/3$ and $\eqref{tre}$ gives $d \le \frac{2(r-5)g - r + 8}{r+1}$, contradicting (viii)-(ix).

Instead in $\eqref{quattro}$, if $\alpha = (2d-g)/3$ we find $m_2 = 2, \mu_2 = 0$ and $\pi_2(d, \alpha) = g -1$. Therefore $\alpha \le (2d-g-1)/3$ and $\eqref{quattro}$ gives $d \le \frac{2(r-5)g - r + 14}{r+1}$, contradicting (viii)-(ix). This concludes the case $r \ge 11$.

Assume now that $4 \le r \le 10$. 

We first claim that \eqref{tre} and \eqref{quattro} do not occur. In fact note that we have 
\begin{equation}
\label{cinque}
d \ge \max\{r+2, g, (g+3r-1)/2\} \ \mbox{in} \ \eqref{tre} \ \mbox{and} \ d \ge \max\{r+2, g, (g+3r)/2\} \ \mbox{in} \ \eqref{quattro}. 
\end{equation}
Plugging in $\alpha \le (2d-g+1)/3$ in \eqref{tre} and $\alpha \le (2d-g)/3$ in \eqref{quattro} we get
\[ d \le \frac{2(r-5)g + r+10}{r+1}  \ \mbox{in case} \ \eqref{tre} \ \mbox{and} \ d \le \frac{2(r-5)g+12}{r+1}  \ \mbox{in case} \ \eqref{quattro} \]
and it is easily seen that these contradict \eqref{cinque}. 

Therefore, in the sequel, we consider only $\eqref{uno}$ and $\eqref{due}$.

If $\alpha \le 7$ (whence $r \le 7$), we see that $\eqref{uno}$ gives 
\begin{equation}
\label{unosette}
d \le \frac{(r-3)g + 7r + 10}{r+1}
\end{equation}
and $\eqref{due}$ gives 
\begin{equation}
\label{duesette}
d \le \frac{(r-3)g + 7r - 10}{r}.
\end{equation}
Now assume $\alpha \ge 8$, so that $g \le \pi_2(d,\alpha), g < \pi_1(d,\alpha)$. Set $i = d + 1 - 3 \alpha$ and $j = d - 3 \alpha$. 

Then $\eqref{uno}$ implies $d < g, i \ge 0$,
\begin{equation}
\label{unobis}
\alpha (m_1-1) [\frac{r-3}{2}m_1 - r - 1] + (\varepsilon_1 + 1) [(r - 3)m_1 - r - 1] + 3 + \mu_1(r - 3) > 0
\end{equation}
and
\begin{equation}
\label{unotris}
(\alpha + 1)(m_2-1) [\frac{r-3}{2}m_2 - r - 1] + (\varepsilon_2 + 1) [(r - 3)m_2 - r - 1 ] - r + 2 + (m_2 + \mu_2)(r - 3) \ge 0.
\end{equation}
On the other hand $\eqref{due}$ implies $d < g, j \ge 0$,
\begin{equation}
\label{duebis}
\alpha [(r - 3) \binom{m_1}{2} - m_1 r + r - 2] + (\varepsilon_1 + 1) [(r - 3)m_1 - r ] + 4 + \mu_1(r - 3) > 0
\end{equation}
and
\begin{equation}
\label{duetris}
(\alpha + 1)[(r - 3) \binom{m_2}{2} - m_2 r + r - 2 ] + (\varepsilon_2 + 1) [(r - 3)m_2 - r ] - r + 6 + (m_2 + \mu_2)(r - 3) \ge 0.
\end{equation}
Suppose now $r=4$. It is easily seen that \eqref{unobis} implies $m_1 \ge 9$ and $i \ge 7 \alpha + 1$, so that $\alpha \le \frac{d}{10}$. Plugging in $\eqref{uno}$ we contradict (i). On the other hand \eqref{duebis} implies $m_1 \ge 8$, so that $\alpha \le \frac{d-1}{8}$, and also $j \ge \frac{11 \alpha - 2}{2}$, so that $\alpha \le \frac{2d+2}{17}$. Moreover \eqref{duetris} implies $m_2 \ge 8$, so that $\alpha \le \frac{d-9}{8}$, and also $j \ge \frac{11 \alpha + 12}{2}$, so that $\alpha \le \frac{2d-12}{17}$. Plugging in $\eqref{due}$ and using \eqref{unosette} and \eqref{duesette} we contradict (i) and the case $r = 4$ is concluded.

If $r=5$ it is easily seen that \eqref{unobis} implies $m_1 \ge 5$ and $i > 3 \alpha + 1$, so that $\alpha < \frac{d}{6}$. Also \eqref{unotris} implies $m_2 \ge 5$, so that $i \ge 3 \alpha + 5$, hence $\alpha \le \frac{d-4}{6}$. Plugging in $\eqref{uno}$ we contradict (ii). On the other hand \eqref{duebis} implies $m_1 \ge 5$, so that $\alpha \le \frac{d-1}{5}$, and also $j \ge \frac{12 \alpha - 4}{5}$, so that $\alpha \le \frac{5d+4}{27}$. Moreover \eqref{duetris} implies $m_2 \ge 5$ and also $j \ge \frac{12 \alpha + 16}{5}$, so that $\alpha \le \frac{5d-16}{27}$. Plugging in $\eqref{due}$ and using \eqref{unosette} and \eqref{duesette} we contradict (ii) and the case $r = 5$ is done.

When $r=6$ we see that \eqref{unobis} implies $m_1 \ge 4$ and $\alpha \le \frac{d-1}{4}$, but also $i > \frac{8 \alpha + 2}{5}$, so that $\alpha < \frac{5d+3}{23}$. Also \eqref{unotris} implies $m_2 \ge 4$, so that $i \ge \frac{ 8 \alpha + 20}{5}$, hence $\alpha \le \frac{5d-15}{23}$. Plugging in $\eqref{uno}$ we contradict (iii). On the other hand \eqref{duebis} implies $m_1 \ge 4$, so that $\alpha < \frac{d-1}{4}$, and also $j > \frac{4 \alpha - 2}{3}$, so that $\alpha < \frac{3d+2}{13}$. Moreover \eqref{duetris} implies $m_2 \ge 4$ so that $\alpha \le \frac{d-5}{4}$, and also $j \ge \frac{4 \alpha + 7}{3}$, so that $\alpha \le \frac{3d-7}{13}$. Plugging in $\eqref{due}$ and using \eqref{unosette} and \eqref{duesette} we contradict (iii) and we have finished the case $r = 6$.

Now assume that $r=7$. We have that \eqref{unobis} implies $m_1 \ge 3$ and $i \ge \alpha + 1$, so that $\alpha \le \frac{d}{4}$. Plugging in $\eqref{uno}$ we contradict (iv). On the other hand \eqref{duebis} implies $m_1 \ge 3$ and $j > \frac{4 \alpha - 4}{5}$, so that $\alpha < \frac{5d+4}{19}$. Moreover \eqref{duetris} implies $m_2 \ge 3$ and $j \ge \frac{4 \alpha + 1}{5}$, so that $\alpha \le \frac{5d-1}{19}$. Plugging in $\eqref{due}$ and using \eqref{unosette} and \eqref{duesette} we contradict (iv). This concludes the case $r = 7$.

If $r=8$ we find from \eqref{unotris} that $m_2 \ge 3$ and $i \ge \frac{\alpha + 6}{2}$, so that $\alpha \le \frac{2d-4}{7}$. Plugging in $\eqref{uno}$ we contradict (v). On the other hand \eqref{duetris} implies $m_2 \ge 3$ so that $\alpha \le \frac{d-4}{3}$ and also $j \ge \frac{3 \alpha + 11}{7}$, so that $\alpha \le \frac{7d-11}{24}$. Plugging in $\eqref{due}$ we contradict (v) and the case $r = 8$ is proved.

Now let $r=9$. Then \eqref{unotris} gives $m_2 \ge 3$ and $i \ge \frac{2\alpha + 23}{8}$, so that $\alpha \le \frac{8d-15}{26}$. Plugging in $\eqref{uno}$ we contradict (vi). On the other hand \eqref{duetris} gives $m_2 \ge 2$ and $j \ge 3$, so that $\alpha \le \frac{d-3}{3}$. We also get $m_2 = 2$ if and only if $(d,g) = (30,33), (30,34)$. Then, if $(d,g) = (30,33), (30,34)$, we see that  \eqref{duetris} implies $m_2 \ge 3$ and $j \ge \frac{2 \alpha + 14}{9}$, so that $\alpha \le \frac{9d-14}{29}$. Plugging in $\eqref{due}$ we contradict (vi) and we are done with the case $r = 9$.

Finally let us do the case $r=10$. We see that \eqref{unotris} gives $m_2 \ge 2$ and $i \ge 4$, so that $\alpha \le \frac{d-3}{3}$. Plugging in $\eqref{uno}$ we contradict (vii). On the other hand \eqref{duebis} gives $m_1 \ge 3$ and $j > \frac{\alpha-4}{11}$, so that $\alpha < \frac{11d+4}{34}$. Also \eqref{duetris} implies $m_2 \ge 2$ and $j \ge 2$, so that $\alpha \le \frac{d-2}{3}$. Plugging in $\eqref{due}$ we contradict (vii) and we are done with the case $r = 10$.
\qedhere
\end{proof}

\section*{Acknowledgements}
We would like to thank the referee for suggesting that we could add (ii) of Corollary \ref{main}.

\bibliographystyle{ams}

\end{document}